\numberwithin{equation}{section}
\newtheorem{prop}{Proposition}
\newtheorem{theo}[prop]{Theorem}
\newtheorem{lemm}[prop]{Lemma}
\newtheorem{coro}[prop]{Corollary}
\newtheorem{rema}[prop]{Remark}
\newtheorem{defi}[prop]{Definition}
\theoremstyle{definition}
\newtheorem*{ack}{Acknowledgment}
\theoremstyle{remark}
\newcommand{\p}{\partial}
\newcommand{\ppr}{(\sqrt{-1}/2) \partial \bar{\partial}}
\newcommand{\ppfr}{\frac{\sqrt{-1}}{2} \partial \bar{\partial}}
\newcommand{\fr}{\frac{\sqrt{-1}}{2}}
\def\lab{\label}
\begin{document}

\title[semilinear equations]{Semilinear equations, the $\gamma_k$ function, and generalized Gauduchon metrics}

\author{Jixiang Fu}
\address{Institute of Mathematics\\ Fudan University \\ Shanghai
200433, China} \email{majxfu@fudan.edu.cn}
\author{Zhizhang Wang}
\address{Institute of Mathematics\\ Fudan University \\ Shanghai
200433, China} \email{youxiang163wang@163.com}
\author{Damin Wu}
\address{Department of Mathematics \\
         The Ohio State University \\
         1179 University Drive, Newark, OH 43055, U.S.A.}
\email{dwu@math.ohio-state.edu}

\begin{abstract}
In this paper, we generalize the Gauduchon metrics on a compact
complex manifold and define the $\gamma_k$ functions on the space of
its hermitian metrics.
\end{abstract}
\maketitle
\section{Introduction}

Let $X$ be a compact $n$-dimensional complex manifold. Let $g$ be a
hermitian metric on $X$ and  $\omega$  its hermitian form. It is
well known that if $d\omega=0$, then $g$ or $\omega$ is called a
K\"ahler metric and therefore $X$ is called a K\"ahler manifold.
When $X$ is a non-K\"ahler manifold, one can consider the other
conditions on $\omega$ such as
\begin{equation} \label{eq:bal}
  d\omega^k=0, \qquad  2\leq k\leq n-1.
\end{equation}
 If $d(\omega^{n-1})=0$, then $g$ or $\omega$ is called a balanced metric and so $X$ is called a balanced manifold \cite{Mic}.
 However, when $2\leq k \leq n-2$, $d\omega^k=0$ automatically yields $d\omega=0$ \cite{GH}.
 Instead of \eqref{eq:bal}, one can consider the $k$-K\"ahler condition \cite{AA}. A complex manifold is called
 $k$-K\"ahler if it admits a closed complex transverse $(k,k)$--form. By this definition,
 a complex manifold is 1-K\"ahler if and only if it is K\"ahler; it is $(n-1)$--K\"ahler if and only if it is
 balanced.

One can also generalize the K\"ahler condition along other directions, for instance,
\begin{equation}\lab{eq:pbpclosed}
\partial\bar\partial\omega^{k}=0, \qquad 1\leq k\leq n-1.
\end{equation}
When $k = n-1$, the metric $\omega$ is called a \emph{Gauduchon} metric.
Gauduchon~\cite{Gau} proved an interesting result that, for any hermitian metric
$\omega$ on a compact complex $n$-dimensional manifold $X$, there exists a
unique (up to a constant) smooth function $v$ such that
\begin{equation} \label{eq:Gaud}
    \partial\bar\partial(e^v\omega^{n-1})=0 \qquad \textup{on $X$}.
\end{equation}
Thus, the Gauduchon metric always exists on a compact complex
manifold. It is important in complex geometry since one can use such
a metric to define the degree, and then make sense of the stability
of holomorphic vector bundles over a non-K\"ahler complex manifold
(see \cite{LiYau}).

When $k = n-2$, the metric $\omega$ satisfying \eqref{eq:pbpclosed}
is called an \emph{astheno-K\"ahler} metric. Jost and Yau \cite{JY}
used this condition to study  hermitian harmonic maps, and extended
Siu's rigidity theorem to non-K\"ahler complex manifolds.

When $k = 1$, the metric $\omega$ in \eqref{eq:pbpclosed} is called
a \emph{pluriclosed} metric, which is also called strong KT
(K\"ahler with torsion) metric (see  \cite{GGP,FT} and the
references therein). Such a condition appeared in \cite{Dem,Bis} as
a technical condition. Recently, Streets and Tian \cite{ST}
introduced a hermitian Ricci flow under which the pluriclosed metric
is preserved.

It is important to find  specific hermitian metrics on non-K\"ahler
complex manifolds. J. Li, S.-T. Yau and Fu \cite{FLY} have
constructed balanced metrics on complex structures of manifolds
$\#_{k\geq 2}(S^3\times S^3)$ which are obtained from the conifold
transition of Calabi-Yau threefolds. As a corollary, there exists no
pluriclosed metric on such manifolds. We note here that the specific hermitian geometry of threefolds $\#_k(\mathbb{S}^3\times\mathbb{S}^3)$ was first considered by Bozhkov
\cite{Bo1,Bo2}. In this paper, we generalize
(\ref{eq:pbpclosed}) to weaker conditions:
\begin{equation}\lab{k-gauduchon}
 \partial\bar\partial \omega^{k}\wedge \omega^{n-k-1}=0,\ \ \ 1\leq k\leq n-1.
\end{equation}

\begin{defi} \label{de:kGau}
Let $\omega$ be a hermitian metric on an $n$-dimensional
complex manifold $X$, and $k$ be an integer such that $1 \le k \le n-1$. We call
$\omega$ the $k$-th Gauduchon metric if $\omega$ satisfies (\ref{k-gauduchon}).
\end{defi}
Note that an $(n-1)$-th Gauduchon metric is the classic Gauduchon
metric. The natural question is whether there exists any $k$-th
Gauduchon metric, $1 \le k \le n-2$, on a complex manifold.  To answer this question, one
way is to look for such a
metric in the conformal class of a given
hermitian metric $\omega$ on $X$:
\begin{equation} \label{eq:nGaud}
\partial\bar\partial(e^v\omega^k)\wedge \omega^{n-k-1}=0.
\end{equation}
However, equation \eqref{eq:nGaud} in general needs not admit a solution (see below for reasons). In this paper, we solve the equation
 \begin{equation} \label{eq:ksl2}
     \partial\bar\partial (e^v \omega^{k}) \wedge \omega^{n-k-1}  = \gamma_k e^v \omega^n
  \end{equation}
for some constant $\gamma_k$ satisfying the compatibility condition.
The constant $\gamma_k$, if nonzero, can be viewed as an obstruction
for the existence of a $k$-th Gauduchon metric in the conformal
class of $\omega$, for $1\le k < n-1$.


Equation \eqref{eq:ksl2} can be reformulated, in a slightly more
general form, as follows: Let $(X,\omega)$ be an $n$-dimensional
compact hermitian manifold, and $B$ be a smooth real $1$-form on
$X$. For any smooth function $f$ on $X$ satisfying
\begin{equation} \label{eq:cpbf}
        \int_X f \omega^n = 0,
\end{equation}
we consider the following semilinear equation
\begin{equation} \label{eq:sl0}
   \Delta v + |\nabla v|^2 + \langle B, dv \rangle = f \qquad \textup{on $X$}.
\end{equation}
Here $\Delta$ and $\nabla$ are, respectively, the Laplacian and covariant differentiation associated with $\omega$. 
Clearly, equation \eqref{eq:sl0} needs not have a solution, due to the compatibility condition \eqref{eq:cpbf}. For instance, let $\omega$ be 
 balanced and $B= 0$, then in order that \eqref{eq:sl0} has a solution the function $f$ has to be zero. Nonetheless, we shall show that, there is a smooth function $v$ so that equation~\eqref{eq:sl0} holds up to a unique constant $c$. More generally, we have the following result:
\begin{theo} \label{th:sl1}
  Let $(X,\omega)$ be a compact hermitian manifold, $B$ be a smooth real $1$-form on $X$, and $\psi \in C^{\infty}(\mathbb{R})$ satisfy
  \begin{equation} \label{eq:defpsi}
     \liminf_{t \to +\infty} \frac{\psi(t)}{t^{\mu}} \ge \nu > 0, \quad \textup{where $\mu > 1/2$ and $\nu$ are constants}.
  \end{equation}
  Then, for each $f \in C^{\infty}(X)$ satisfying \eqref{eq:cpbf}, there exists a unique constant $c$, and a smooth function $v$ on $X$, unique up to a constant, such that
  \begin{equation} \label{eq:sl1}
        \Delta v + \psi(|\nabla v|^2) + \langle B, dv \rangle = f + c \qquad \textup{on $X$}.
  \end{equation}
\end{theo}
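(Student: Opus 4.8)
The plan is to solve \eqref{eq:sl1} by the continuity method, carrying the pair $(v,c)$ as the unknown and reducing the whole problem to one a priori gradient estimate into which the hypothesis $\mu>1/2$ enters decisively. Fix $\alpha\in(0,1)$, set
$$
\mathcal E(v):=\Delta v+\psi(|\nabla v|^2)+\langle B,dv\rangle ,
$$
and for $t\in[0,1]$ consider the equation $\mathcal E(v)=tf+c$, with
$$
S=\bigl\{t\in[0,1]:\ \mathcal E(v)=tf+c\ \text{has a solution}\ (v,c)\in C^{2,\alpha}(X)\times\mathbb R\bigr\}.
$$
At $t=0$ the pair $v\equiv 0$, $c=\psi(0)$ solves the equation, so $0\in S$; the goal is $S=[0,1]$, since $t=1$ is exactly \eqref{eq:sl1}. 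Deforming only the right-hand side keeps the nonlinear term $\psi(|\nabla v|^2)$ and the drift $\langle B,dv\rangle$ switched on for every $t$, which is what makes the gradient estimate below uniform in $t$.

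For openness I would linearize. At a solution $v$ the linearized operator is
$$
\mathcal L_v w=\Delta w+2\psi'(|\nabla v|^2)\langle\nabla v,\nabla w\rangle+\langle B,dw\rangle ,
$$
a second-order elliptic operator with no zeroth-order term; hence $\mathcal L_v 1=0$, and by the strong maximum principle its kernel is the constants. Since $\mathcal L_v$ is Fredholm of index $0$, the cokernel is one dimensional, and because $0$ is the principal eigenvalue of $\mathcal L_v$ with positive eigenfunction $1$, the formal adjoint $\mathcal L_v^*$ shares the principal eigenvalue $0$ with a strictly positive eigenfunction $\rho_v$. Consequently $(w,\dot c)\mapsto \mathcal L_v w-\dot c$ is an isomorphism from $\bigl(C^{2,\alpha}(X)/\mathbb R\bigr)\times\mathbb R$ onto $C^{0,\alpha}(X)$, and the implicit function theorem makes $S$ open. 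The same positivity of $\rho_v$, applied to the mean-value linearization along the segment joining two solutions, yields both the uniqueness of $c$ and the uniqueness of $v$ modulo constants asserted in the theorem.

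The heart of the matter is the closedness of $S$, i.e.\ uniform a priori estimates. The constant is controlled first: evaluating $\mathcal E(v)=tf+c$ at a maximum and a minimum point of $v$ (where $\nabla v=0$ and $\Delta v$ has a sign) gives $\psi(0)-\max|f|\le c\le \psi(0)+\max|f|$, uniformly in $t$. The crucial bound is on the gradient, via a Bernstein argument: set $w=|\nabla v|^2$ and apply the Bochner formula,
$$
\tfrac12\Delta w=|\nabla^2 v|^2+\langle\nabla v,\nabla\Delta v\rangle+\mathrm{Ric}(\nabla v,\nabla v)+(\text{torsion terms}),
$$
substituting $\nabla\Delta v$ from the differentiated equation. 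At an interior maximum $p$ of $w$ one has $\nabla w(p)=0$, so the term carrying $\psi'$ drops out, and after absorbing the first-order contributions of $B$ into $|\nabla^2 v|^2$ one obtains $|\nabla^2 v|^2(p)\le C\bigl(w(p)+\sqrt{w(p)}+1\bigr)$. On the other hand $|\nabla^2 v|^2\ge \tfrac1{2n}(\Delta v)^2$, while the equation gives $|\Delta v|(p)\ge \psi(w(p))-C\sqrt{w(p)}-C$; by \eqref{eq:defpsi}, $\psi(w(p))\ge \tfrac{\nu}{2}\,w(p)^{\mu}$ for $w(p)$ large, and since $\mu>1/2$ the term $w^{\mu}$ dominates $\sqrt w$, whence $|\nabla^2 v|^2(p)\ge \kappa\,w(p)^{2\mu}$ for some $\kappa>0$. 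Combining the two bounds forces $\kappa\,w(p)^{2\mu}\le C\bigl(w(p)+\sqrt{w(p)}+1\bigr)$, which, because $2\mu>1$, is impossible unless $w(p)\le M$ for a uniform $M$; thus $\sup_X|\nabla v|^2\le M$. I expect this gradient estimate, together with the correct bookkeeping of the hermitian torsion terms in the Bochner formula, to be the main obstacle, and it is exactly here that the threshold $\mu>1/2$ is used.

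With the gradient bounded the rest is routine: normalizing $\min_X v=0$ gives a $C^0$ bound from $\mathrm{osc}_X v\le (\mathrm{diam}\,X)\sup_X|\nabla v|$, the equation then has right-hand side bounded in $C^0$, so $W^{2,p}$ theory yields $v\in C^{1,\alpha}$, and Schauder bootstrapping produces a uniform $C^{2,\alpha}$ (hence $C^\infty$) bound. Arzel\`a--Ascoli then extracts a convergent subsequence of solutions as $t\to t_*$, showing $t_*\in S$ and closing the continuity argument; therefore $S=[0,1]$, and in particular $1\in S$ solves \eqref{eq:sl1}.
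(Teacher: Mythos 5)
Your proposal is correct and follows essentially the same route as the paper: a continuity method in which openness comes from the Fredholm alternative for the drift Laplacian (the paper projects onto zero-mean spaces and cites Lemma 13 of \cite{FuWangWu}, while you carry the constant $c$ explicitly and use the positive adjoint eigenfunction $\rho_v$ --- equivalent bookkeeping), and the same Bernstein-type gradient estimate at a maximum point of $|\nabla v|^2$, where the $\psi'$-term drops out, the hermitian torsion terms are absorbed into $|\nabla^2 v|^2$, and $\mu>1/2$ makes $w^{2\mu}$ dominate the lower-order terms. The only other cosmetic difference is in uniqueness, where the paper applies the plain maximum principle to the mean-value linearization to get $c_w=0$ instead of pairing against $\rho_v$; the two arguments are interchangeable.
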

\begin{rema} \label{re:cmpa}
  The compatibility condition of \eqref{eq:sl1} implies that
  \[
     c = \frac{\int_X (\Delta v + \psi(|\nabla v|^2) + \langle B, dv \rangle)\, \omega^n}{\int_X \omega^n},
  \]
  which in general is nonzero.
\end{rema}
Letting $\psi(t) = t$ on $\mathbb{R}$, we obtain an application of Theorem~\ref{th:sl1}:
\begin{coro} \label{co:sl2}
  Let $(X,\omega)$ be an $n$-dimensional compact hermitian manifold. For any integer $1 \le k \le n-1$, there exists a unique constant $\gamma_k$, and a function $v \in C^{\infty}(X)$ satisfying that
  \begin{equation} \label{eq:sl2}
     \ppr (e^v \omega^{k}) \wedge \omega^{n-k-1}  = \gamma_k e^v \omega^n.
  \end{equation}
  The solution $v$ of \eqref{eq:sl2} is unique up to a constant.
  In particular, when $k = n -1$ we have $\gamma_{n-1} = 0$. If $\omega$ is K\"ahler,
  then $\gamma_k = 0$ and $v$ is a constant, for each $1 \le k \le n-1$.
\end{coro}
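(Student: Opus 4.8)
The plan is to expand the left-hand side of \eqref{eq:sl2} by the Leibniz rule, rewrite every resulting $(n,n)$-form as a scalar multiple of $\omega^n$, and thereby recognize \eqref{eq:sl2} as an instance of \eqref{eq:sl1} with $\psi$ linear; Theorem~\ref{th:sl1} then applies verbatim. Writing $u = e^v$ and using $\partial u = u\,\partial v$, $\bar\partial u = u\,\bar\partial v$, $\partial\bar\partial u = u(\partial v\wedge\bar\partial v + \partial\bar\partial v)$, a direct computation gives
\begin{align*}
  \partial\bar\partial(e^v\omega^k)
  &= e^v\big[\partial\bar\partial v\wedge\omega^k
     + \partial v\wedge\bar\partial v\wedge\omega^k \\
  &\qquad\ - \bar\partial v\wedge\partial\omega^k
     + \partial v\wedge\bar\partial\omega^k
     + \partial\bar\partial\omega^k\big].
\end{align*}
Wedging with $\omega^{n-k-1}$, dividing by $e^v$, and multiplying by $\sqrt{-1}/2$, equation \eqref{eq:sl2} becomes
\[
  \fr\,\partial\bar\partial v\wedge\omega^{n-1}
  + \fr\,\partial v\wedge\bar\partial v\wedge\omega^{n-1}
  + T(dv) + \fr\,\partial\bar\partial\omega^k\wedge\omega^{n-k-1}
  = \gamma_k\,\omega^n,
\]
where $T(dv) = \fr(-\bar\partial v\wedge\partial\omega^k + \partial v\wedge\bar\partial\omega^k)\wedge\omega^{n-k-1}$ collects the terms that are first order and linear in $dv$.

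The next step is to convert each $(n,n)$-form into a multiple of $\omega^n$ via the standard trace identities: $\fr\,\partial\bar\partial v\wedge\omega^{n-1} = \tfrac{1}{2n}(\Delta v)\,\omega^n$ and $\fr\,\partial v\wedge\bar\partial v\wedge\omega^{n-1} = \tfrac{1}{2n}|\nabla v|^2\,\omega^n$ (the second up to the fixed positive constant relating the Chern trace to $|\nabla v|^2$, which is harmless below). Here $\Delta$ is the Chern Laplacian; any first-order discrepancy with the Laplacian of Theorem~\ref{th:sl1} has smooth coefficients and is absorbed into $B$. Since $v$ is real and $\overline{\partial\omega^k}=\bar\partial\omega^k$, one checks that $T(dv)$ is a \emph{real} $(n,n)$-form depending linearly on $dv$, so $T(dv) = \tfrac{1}{2n}\langle B, dv\rangle\,\omega^n$ for a smooth real $1$-form $B$ determined by $\partial\omega,\bar\partial\omega$; finally $\fr\,\partial\bar\partial\omega^k\wedge\omega^{n-k-1} = \tfrac{1}{2n}\,h\,\omega^n$ with $h\in C^\infty(X)$ independent of $v$. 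Multiplying by $2n$ and rearranging, \eqref{eq:sl2} is equivalent to $\Delta v + |\nabla v|^2 + \langle B, dv\rangle = f + c$, where $f := -h + \bar h$ with $\bar h = \big(\int_X h\,\omega^n\big)\big/\big(\int_X\omega^n\big)$ so that \eqref{eq:cpbf} holds, and $c := 2n\gamma_k - \bar h$. This is exactly \eqref{eq:sl1} for $\psi(t)=t$ (or a positive multiple of $t$), which satisfies \eqref{eq:defpsi} with $\mu=1$, $\nu>0$. Theorem~\ref{th:sl1} then produces a unique constant $c$ and a function $v$, unique up to a constant; solving $\gamma_k = (c+\bar h)/(2n)$ yields the unique $\gamma_k$ and the stated uniqueness of $v$.

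It remains to treat the two special cases. When $k=n-1$ we have $\omega^{n-k-1}=1$, so the left-hand side of \eqref{eq:sl2} is $\ppr(e^v\omega^{n-1})$; integrating over the compact $X$ and applying Stokes' theorem gives $\int_X \ppr(e^v\omega^{n-1})=0$, hence $\gamma_{n-1}\int_X e^v\omega^n=0$. As $e^v>0$ forces $\int_X e^v\omega^n>0$, we get $\gamma_{n-1}=0$, recovering \eqref{eq:Gaud}. When $\omega$ is K\"ahler, $d\omega=0$ gives $\partial\omega=\bar\partial\omega=0$ and $\partial\bar\partial\omega^k=0$, so both $B$ and $h$ vanish; then the constant function $v\equiv 0$ solves \eqref{eq:sl2} with $\gamma_k=0$, and by the uniqueness just established $v$ must be constant and $\gamma_k=0$ for every $1\le k\le n-1$.

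The only content beyond bookkeeping is the passage to \eqref{eq:sl1}: I expect the main point requiring care to be the verification that the cross terms $-\bar\partial v\wedge\partial\omega^k + \partial v\wedge\bar\partial\omega^k$ assemble, after multiplication by $\sqrt{-1}/2$, into $\langle B, dv\rangle$ for a \emph{globally defined smooth real} $1$-form $B$, together with the observation that the leading second-order operator is the metric Laplacian up to an absorbable first-order remainder. Once this is confirmed, existence, uniqueness of $v$ up to a constant, and uniqueness of $\gamma_k$ are immediate from Theorem~\ref{th:sl1}; the genuine analytic difficulty lies entirely in that theorem.
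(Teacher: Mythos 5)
Your proposal is correct and takes essentially the same route as the paper: expand $\partial\bar\partial(e^v\omega^k)\wedge\omega^{n-k-1}$, use the trace identities to rewrite \eqref{eq:sl2} as the scalar equation $\Delta v + |\nabla v|^2 + \langle B_1, dv\rangle + \varphi = n\gamma_k$ (the paper even records your abstract $1$-form $B$ explicitly via the Hodge star in \eqref{102}), and then apply Theorem~\ref{th:sl1} with $\psi(t)=t$, obtaining $\gamma_{n-1}=0$ by Stokes' theorem. The only minor deviation is the K\"ahler case, where you exhibit the trivial solution $(v,\gamma_k)=(0,0)$ and invoke uniqueness, whereas the paper deduces $\gamma_k=0$ from the integral identity \eqref{eq:gam2} and then $\int_X|\nabla v|^2\,\omega^n=0$ from \eqref{eq:gam1}; both arguments are valid.
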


\begin{rema}
When $k=n-1$, this corollary recovers the classical result of Gauduchon~\cite{Gau}.
\end{rema}

By Corollary~\ref{co:sl2}, we can associate each hermitian metric
$\omega$ a unique constant $\gamma_k(\omega)$. Clearly, $\gamma_k =
\gamma_k(\omega)$ is invariant under biholomorphisms. Furthermore,
we will prove that $\gamma_k$ depends smoothly on the hermitian
metric $\omega$ (see Proposition~\ref{pr:moduli}); and that
$\gamma_k(\omega) = 0$ if and only if there exists a  $k$-th
Gauduchon metric in the conformal class of $\omega$ (Proposition~\ref{pr:kGau}).

We will prove in Proposition~\ref{pr:cfinv} that the sign of $\gamma_k(\omega)$, denoted by $(\textup{sgn} \gamma_k)(\omega)$,
is invariant in the conformal class of $\omega$.
We denote by $\Xi_k(X)$ the range of $\text{sgn}\gamma_k$. By definition
$\Xi_k(X) \subset \{-1,0, 1\}$ for each $k$, and by Corollary~\ref{co:sl2} we have $\Xi_{n-1}(X)=\{0\}$.
A natural question is \emph{whether $\Xi_k(X)=\{-1,0, 1\}$ for any
$1\leq k\leq n-2$ on any compact complex manifold $X$}.
Indeed, if $\Xi_k(X) \supset \{-1, 1\}$ then the answer is positive,
by Proposition~\ref{pr:moduli}. Thus, there will be a $k$-th
Gauduchon metric on $X$. We can also ask whether $\Xi_k(X)$ is
invariant under the modification. These questions will be
systematically studied later. As a first step, we obtain the
following result.
\begin{theo} \label{th:posgam}
  For $n = 3$, we have $1\in \Xi_1(X)$. Namely, for any $3$-dimensional hermitian manifold $X$, there exists a hermitian metric $\omega$ such that $\gamma_1(\omega) > 0$. In particular, there is no 1-st Gauduchon metric in the conformal class of $\omega$.
\end{theo}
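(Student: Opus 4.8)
The plan is to choose a Hermitian metric for which the $(3,3)$-form on the left of \eqref{eq:sl2} is pointwise nonnegative, and then exploit the positivity of the weight $e^v$. First I would integrate the defining equation \eqref{eq:sl2} with $k=1$, $n=3$ over $X$ and integrate by parts twice, moving $\partial\bar\partial$ off $e^v\omega$ onto $\omega$; since $X$ is closed this gives $\int_X \partial\bar\partial(e^v\omega)\wedge\omega=\int_X e^v\,\omega\wedge\partial\bar\partial\omega$, hence
\[
   \gamma_1\int_X e^v\omega^3=\int_X e^v\,\ppr\omega\wedge\omega .
\]
Writing $\ppr\omega\wedge\omega=h\,\omega^3$, this exhibits $\gamma_1$ as the $e^v$-weighted average of the density $h$. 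Because $e^v>0$, it suffices to produce a metric $\omega$ with $\ppr\omega\wedge\omega\ge 0$ as a $(3,3)$-form and $\not\equiv 0$: the weighted integral is then strictly positive and $\gamma_1(\omega)>0$, no matter what the uncontrolled conformal factor $v$ does.

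The next step is a pointwise identity obtained by expanding $\partial\bar\partial\omega^2$, namely $\tfrac12\,\partial\bar\partial\omega^2=\partial\bar\partial\omega\wedge\omega-\bar\partial\omega\wedge\partial\omega$, which rearranges to
\[
   \ppr\omega\wedge\omega=\fr\,\bar\partial\omega\wedge\partial\omega+\tfrac14\,\pp\,\omega^2 .
\]
Setting $T=\partial\omega$ and using $\bar\partial\omega=\overline{\partial\omega}$, the first term is $\fr\,\overline{T}\wedge T$; a direct computation in a unitary coframe (equivalently, the Hodge--Riemann relations) shows that $\fr\,\overline{T}\wedge T\ge 0$ whenever $T$ is primitive, that is $\Lambda_\omega\partial\omega=0$, with strict positivity wherever $\partial\omega\ne 0$. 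Thus if $\omega$ is both Gauduchon ($\pp\,\omega^2=0$) and balanced ($\partial\omega$ primitive), the second term drops out and $\ppr\omega\wedge\omega=\fr\,\overline{T}\wedge T\ge 0$, $\not\equiv 0$. Since every balanced metric is Gauduchon, any balanced non-K\"ahler metric meets the criterion of the first paragraph, and $\gamma_1>0$. This already settles the theorem on every threefold carrying a balanced metric.

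The heart of the matter is therefore a threefold admitting no balanced metric (for instance a locally conformally K\"ahler threefold), where $\partial\omega$ necessarily retains a nonzero primitive-complement (Lee) part and $h$ genuinely changes sign, so the clean sufficient condition above is unavailable. Here I would argue by a local perturbation: starting from a Gauduchon metric $\omega_0$, add a small, compactly supported real $(1,1)$-form $\rho$ inside one coordinate chart with $\partial\rho$ primitive and nonzero there, and set $\omega_s=\omega_0+s\rho$, positive for small $|s|$. The aim is to show that this injection of definite positive primitive torsion drives $\int_X e^{v_s}h_s\,\omega_s^3$ to the positive side for a suitable $s$, using the smooth dependence of $\gamma_1$ on $\omega$ (Proposition~\ref{pr:moduli}). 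The step I expect to be the main obstacle is precisely the control of the conformal weight $e^{v_s}$: since $h_s$ cannot be forced nonnegative everywhere, one must estimate the weighted integral directly and prove that the local positive contribution of $\partial\rho$ dominates both the indefinite background density of $\omega_0$ and the cross terms produced by the nonconstant weight. It is here that the dimension $n=3$ (so that $\partial\omega$ is a $(2,1)$-form and primitivity is a single pointwise scalar condition) and the smallness of $s$ are essential, combined with the a priori estimates underlying Theorem~\ref{th:sl1}.
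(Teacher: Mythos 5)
Your first two steps are actually correct, and the integration by parts in step 1 is legitimate: for forms of bidegree $(k,k)$ and $(n-k-1,n-k-1)$ on a compact complex $n$-fold, two applications of Stokes' theorem give $\int_X\p\bar{\p}\alpha\wedge\beta=\int_X\alpha\wedge\p\bar{\p}\beta$, so indeed $\gamma_1\int_X e^v\omega^3=\fr\int_X e^v\,\p\bar{\p}\omega\wedge\omega$, and any metric with $\ppr\omega\wedge\omega\ge 0$, $\not\equiv 0$, has $\gamma_1>0$; the Hodge--Riemann computation showing $\fr\,\bar{\p}\omega\wedge\p\omega\ge 0$ when $\p\omega$ is primitive (i.e.\ $\omega$ balanced) is also right. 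The genuine gap is your third step: the theorem asserts existence of a metric with $\gamma_1>0$ on \emph{every} threefold, including those carrying no balanced metric at all (e.g.\ $S^5\times S^1$, treated in the last section of the paper, which is precisely where this theorem gets used) and K\"ahler threefolds (where balanced metrics exist but may be K\"ahler, giving $\gamma_1=0$, so even there your argument needs more). For the remaining case you offer only a perturbation sketch whose key step --- controlling the weight $e^{v_s}$ against a sign-indefinite density --- you yourself flag as unresolved. That difficulty is not incidental: it is built into your step-1 identity, where the unknown factor $e^v$ multiplies the indefinite density $h$, so nothing short of pointwise nonnegativity of $h$ (or control of $v$) can close the argument.

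The paper's proof removes the obstacle by dividing \eqref{eq:sl2} by $e^v$ \emph{before} integrating. Carrying out the same integration by parts with all terms kept (identity \eqref{eq:gamint1}, together with \eqref{eq:defgam}) gives, for $n=3$, $k=1$,
\[
\gamma_1\int_X\omega^3=\fr\int_X\p\bar{\p}\omega\wedge\omega+\fr\int_X\p v\wedge\bar{\p}v\wedge\omega^2\;\ge\;\fr\int_X\p\bar{\p}\omega\wedge\omega,
\]
the point being that the coefficient $1-\tfrac{2k}{n-1}$ of the only other $v$-dependent term, $\int_X v\,\p\bar{\p}\omega^{n-1}$, vanishes exactly when $k=(n-1)/2$ (this, not primitivity being a scalar condition, is the true role of $n=3$), while the surviving $v$-term has a favorable sign. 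Hence $\gamma_1>0$ as soon as the torsion \emph{integral} $\fr\int_X\p\bar{\p}\omega\wedge\omega$ is positive --- no pointwise positivity and no control of $v$ whatsoever (Lemma~\ref{10}). Such a metric exists on every threefold by a purely local construction (Theorem~\ref{12}): take a compactly supported semi-positive form $\mathring{\omega}=\fr\bigl(\phi\,dz_1\wedge d\bar{z}_1+\psi\,dz_2\wedge d\bar{z}_2\bigr)$ in one coordinate chart, with $\phi,\psi$ products of bump functions, for which $\fr\int_X\p\bar{\p}\mathring{\omega}\wedge\mathring{\omega}=\bigl(\int fg\,dt\bigr)\bigl(-\int f'g'\,dt\bigr)>0$ for suitable $f,g\ge 0$, and then pass to the genuine metric $\omega_t=\mathring{\omega}+t\hat{\omega}$ for small $t>0$, the integral positivity being an open condition. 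Note that this $\mathring{\omega}$ has sign-changing density, so it could never feed into your pointwise criterion. If you replace your step 3 by this $e^{-v}$-weighted identity and local construction, the proof closes; your steps 1--2 then remain as a clean alternative treatment of the balanced non-K\"ahler case.
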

Then, we combine the above results to prove that, as an example,
$\Xi_1=\{-1,0,1\}$ on the three-dimensional complex manifolds
constructed by Calabi~\cite{Ca}. As a consequence, there exists a
1-st Gauduchon metric on these manifolds. It is well-known that such
manifolds are non-K\"ahler but admit balanced metrics. We do not
know whether there exists any pluriclosed metric on them.

Another example we considered is $Y = S^5 \times S^1$, endowed with a  complex structure so that the natural projection $\pi: S^5 \times S^1 \to \mathbb P^2$ is holomorphic. This would imply that there is no balanced metrics on $S^5 \times S^1$. Moreover, we can prove that $S^5 \times S^1$ does not admit any pluriclosed metric. On the other hand, by considering a natural hermitian metric on $S^5\times S^1$, we are able to show that $\Xi_1(S^5 \times S^1) = \{-1,0,1\}$. Thus, $S^5 \times S^1$ admits a 1-st Gauduchon metric.

We shall solve equation~\eqref{eq:sl1} by the continuity method. In
Section~\ref{se:NP}, we set up the machinery and prove the openness.
The closedness and \emph{a priori} estimates are established in
Section~\ref{se:sl}. In Section~\ref{se:UC}, we prove the uniqueness
part in Theorem~\ref{th:sl1} and also prove Corollary~\ref{co:sl2}.
In Section~\ref{se:GG}, we discuss the relation between $\gamma_k$
and the $k$-th Gauduchon metric. In section~\ref{se:posgam}, we
prove Theorem~\ref{th:posgam}, and explicitly construct a metric
with positive $\gamma_1$ on the complex $3$-torus.  As another
example, we show that the natural balanced metric on the Iwasawa
manifold has a positive $\gamma_1$ number. In section 7, we
establish the existence of $1$-st Gauduchon metric on Calabi's
$3$-dimensional non-K\"ahler manifold, by using
Theorem~\ref{th:posgam} and proving that the balanced metric on the
manifold has a negative $\gamma_1$ number. In the last section, we prove the existence of a 1-st Gauduchon metric on $S^5\times S^1$. We also show the nonexistence of balanced metric and pluriclosed metric on $S^5 \times S^1$.

\begin{ack}
The authors would like to thank Professor S.-T.~Yau for helpful
discussion. Part of the work was done while the third named author
was visiting Fudan University, he would like to thank their warm
hospitality. Fu is supported in part by NSFC grants and LMNS.
\end{ack}

\section{Notation and preliminaries} \label{se:NP}
Throughout this note, we use the following convention: We write
\[
   \omega = \fr \sum_{i,j = 1}^n g_{i\bar{j}} dz^i \wedge d \bar{z}^j.
\]
Let $(g^{i\bar{j}})$ be the transposed inverse of the matrix $(g_{i\bar{j}})$. For any two real $1$-forms $A$ and $B$ on $X$, locally given by
 \[
   A  = \sum_{i=1}^n \big(A_i d z_i + A_{\bar{i}} d \bar{z}_i \big) \quad \mbox{and} \quad
   B  = \sum_{i=1}^n \big( B_i d z_i + B_{\bar{i}} d \bar{z}_i \big),
 \]
we denote
 \[
    \langle A, B \rangle_{\omega} = \frac{1}{2}\sum_{i,j = 1}^n g^{i\bar{j}} \big( A_i B_{\bar{j}} + A_{\bar{j}} B_i \big).
 \]
 We may omit the subscript $\omega$ in $\langle \cdot, \cdot \rangle_{\omega}$ when it is understood from the context.
 In particular, we have
 \[
    \langle d h, d h \rangle =  \sum_{i,j=1}^n g^{i\bar{j}}\frac{\partial h}{\partial z_i}\frac{\partial h}{\partial \bar{z}_j} \equiv |\nabla h|^2, \qquad \textup{for all $h \in C^1(X)$}.
 \]
The Laplacian $\Delta$ associated with $\omega$ is given by
\[
   \Delta h = \frac{n \omega^{n-1} \wedge \ppr h}{\omega^n} = \sum_{i,j=1}^n g^{i\bar{j}} h_{i\bar{j}}, \qquad \textup{for all $h \in C^2(X)$}.
\]

We use the continuity method to solve \eqref{eq:sl1}. Fix an integer
$l \ge n+4$ and a real number $0 < \alpha <1$. We denote by
$C^{l,\alpha}(X)$ the usual H\"older space on $X$. Let
\[
   S(u) = \Delta u + \psi(|\nabla u|^2) + \langle B, du \rangle
   - \frac{\int_X (\Delta u + \psi(|\nabla u|^2) + \langle B, du \rangle) \, \omega^n }{\int_X \omega^n},
\]
for each $u \in C^{l,\alpha}(X)$. 
Consider the following family of equations,
\begin{eqnarray} \label{eq:slt}
 S(v_t) = t f, \qquad \textup{$0 \le t \le 1$}.
\end{eqnarray}
Let $I$ be the subset of $[0,1]$ consisting of $t$ for which the
equation \eqref{eq:slt} has a solution $v_t \in C^{l,\alpha}(X)$
satisfying
\begin{eqnarray} \label{eq:nmlt}
\int_X v_t \, \omega^n=0.
\end{eqnarray}
Obviously, the set $I$ is nonempty since $0 \in I$. The openness of $I$ will follow from our previous results~\cite[Section 3]{FuWangWu}. Indeed, let
\begin{equation}\lab{101}
   \mathcal{E}^{l,\alpha}_\omega = \left\{ h \in C^{l,\alpha}(X); \int_X h \omega^n = 0 \right\}.
\end{equation}
Notice that $S : \mathcal{E}^{l+2,\alpha}_\omega \to
\mathcal{E}^{l,\alpha}_\omega$. The linearization of $S$ is
\[
   L_{\omega} (h) = \left. \frac{d }{d t} S(v + th) \right|_{t = 0} = \Delta h + \langle \tilde{B}, d h\rangle - \frac{\int_X (\Delta h + \langle \tilde{B}, d h \rangle) \omega^n }{\int_X \omega^n},
\]
where
\[
  \tilde{B} = B + 2 \psi'(|\nabla v|^2) \, d v.
\]
It follows from the proof of Lemma 13 in \cite{FuWangWu} that
$L_{\omega}$ is a linear isomorphism from
$\mathcal{E}^{l+2,\alpha}(X)$ to $\mathcal{E}^{l,\alpha}(X)$. Thus,
by the implicit theorem we obtain the openness of $I$.

For the closedness of $I$ we need the \emph{a priori} estimate, which will be established in Section~\ref{se:sl}.

\section{A Prior estimates} \label{se:sl}
 Let $(X,\omega)$ be an $n$-dimensional hermitian
 manifold, $B$ a smooth $1$-form on $X$,  $f$ a smooth function on $X$, $c$ a constant, and $\psi \in C^{\infty}(\mathbb{R})$ satisfy \eqref{eq:defpsi}.
 Consider the following semi-linear equation:
\begin{equation} \label{eq:sl}
    S(v) \equiv \Delta v + \psi(|\nabla v|^2) + \langle B, dv\rangle - c = f  \qquad \textup{on
    $X$},
\end{equation}
where $v \in C^3(X)$ satisfies the normalization condition
\begin{eqnarray}\label{eq:nm1}
\int_X v \, \omega^n=0.
\end{eqnarray}
We shall first derive a uniform gradient estimate:
\begin{lemm} \label{le:C1}
  Let $v \in C^3(X)$ be a solution of \eqref{eq:sl}. We have
\[
    \sup_X | \nabla v | \le C,
\]
where $C > 0$ is a constant depending only on $B$, $f$,
$\omega$, $\psi(0)$, $\mu$ and $\nu$.
\end{lemm}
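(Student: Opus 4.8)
The plan is to establish the estimate by the maximum principle applied to the gradient-square function $Q = |\nabla v|^2$, after first recording an \emph{a priori} bound on the constant $c$, which appears in \eqref{eq:sl} but not in $\nabla v$ and so must not be allowed to enter the final constant. To control $c$, I would evaluate \eqref{eq:sl} at a maximum point $x_+$ and a minimum point $x_-$ of $v$ itself. At such points $dv = 0$, so $\psi(|\nabla v|^2) = \psi(0)$ and $\langle B, dv\rangle = 0$; moreover the complex Hessian $(v_{i\bar j})$ is negative (resp. positive) semidefinite, whence $\Delta v(x_+) = g^{i\bar j}v_{i\bar j}(x_+) \le 0$ and $\Delta v(x_-)\ge 0$. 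Substituting into \eqref{eq:sl} gives $c = \Delta v(x_\pm) + \psi(0) - f(x_\pm)$, and the two sign conditions yield
\[
  \psi(0) - \sup_X|f| \;\le\; c \;\le\; \psi(0) + \sup_X|f|,
\]
so $|c|$ is controlled by $\psi(0)$ and $f$ alone, within the permitted dependence.

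Next I would carry out a Bochner-type computation for $Q$. Choosing at a point $p$ holomorphic coordinates with $g_{i\bar j}(p) = \delta_{ij}$, a direct differentiation of $Q = g^{i\bar j}v_i v_{\bar j}$ gives, at $p$,
\[
  \Delta Q = |\partial\partial v|^2 + |\partial\bar\partial v|^2 + 2\,\mathrm{Re}\big(g^{i\bar j}v_{\bar j}\,\partial_i(\Delta v)\big) + E,
\]
where $E$ collects the first derivatives of the metric (torsion) and the curvature, and is bounded by $|E| \le C_1 Q + C_2\sqrt{Q}\,|\nabla^2 v|$ with $C_1, C_2$ depending only on $\omega$. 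The crucial step is to differentiate the equation: $\partial_i(\Delta v) = \partial_i f - \psi'(Q)\,\partial_i Q - \partial_i\langle B, dv\rangle$. Here is where the choice of $Q$ pays off. If $p$ is a maximum point of $Q$, then $\partial Q(p) = 0$, so the term $\psi'(Q)\,\partial_i Q$ — the only place the uncontrolled derivative $\psi'$ enters — vanishes identically at $p$. The third-order term then reduces to $2\,\mathrm{Re}\langle \partial f, \partial v\rangle - 2\,\mathrm{Re}\langle \partial\langle B, dv\rangle, \partial v\rangle$, which is bounded by $C(f, B, \omega)(\sqrt Q + Q + \sqrt Q\,|\nabla^2 v|)$.

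Combining these with $\Delta Q(p) \le 0$, absorbing the cross term $\sqrt Q\,|\nabla^2 v|$ into the good terms $|\partial\partial v|^2 + |\partial\bar\partial v|^2$ via Young's inequality (using $|\nabla^2 v|^2 \le C(|\partial\partial v|^2 + |\partial\bar\partial v|^2 + Q)$), I expect to reach $|\partial\bar\partial v|^2 \le C(Q + \sqrt Q)$ at $p$, with $C = C(B,f,\omega)$. Since $(\Delta v)^2 \le n\,|\partial\bar\partial v|^2$ by Cauchy--Schwarz, and the equation gives $\Delta v = f + c - \psi(Q) - \langle B, dv\rangle$, this becomes $(\psi(Q) - c - f + \langle B, dv\rangle)^2 \le C(Q + \sqrt Q)$ at $p$. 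Now invoke the bound on $c$ from the first step together with the growth hypothesis \eqref{eq:defpsi}: for $Q(p)$ large one has $\psi(Q) \ge \tfrac{\nu}{2}Q^{\mu}$, while the right-hand side grows like $Q$ and $\langle B, dv\rangle = O(\sqrt Q)$; since $\mu > 1/2$ forces $2\mu > 1$, the inequality $\tfrac{\nu}{2}Q^{\mu} \le C(1 + \sqrt Q)$ fails once $Q(p)$ exceeds a constant depending only on $B, f, \omega, \psi(0), \mu, \nu$. As $Q(p) = \sup_X|\nabla v|^2$, this is the desired estimate. The main obstacle I anticipate is the careful bookkeeping of the remainder $E$ on a general non-K\"ahler hermitian manifold: the first derivatives of the metric do not vanish in normal coordinates, so the torsion produces genuine extra terms, and one must verify they are all of order $Q + \sqrt Q\,|\nabla^2 v|$ so as to be absorbable. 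Everything else — the bound on $c$, the $\psi'$ cancellation, and the final growth argument — is comparatively mechanical once this is in place.
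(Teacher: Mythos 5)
Your proposal is correct and follows essentially the same route as the paper: a maximum-principle argument at the peak of $Q=|\nabla v|^2$, with the uncontrolled $\psi'$ term killed by $\nabla Q=0$ there (the paper packages this same cancellation into a drift operator $L(h)=\Delta h+2\psi'(|\nabla v|^2)\langle dh,dv\rangle$), the Cauchy--Schwarz step $(\Delta v)^2\le n\sum|v_{i\bar j}|^2$ feeding the equation back in, the growth hypothesis $\mu>1/2$ forcing the bound, and the constant $c$ controlled by the maximum principle applied to $v$ itself. The only difference is cosmetic ordering (you bound $c$ first, the paper last) and your more explicit bookkeeping of the torsion error terms, which the paper absorbs with the same Schwarz-inequality device you describe.
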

Throughout this section, we always denote by $C > 0$ a generic
constant depending only on $B$, $f$, $\omega$, $\psi(0)$, $\mu$, and $\nu$, unless otherwise
indicated.

\begin{proof}
Since $X$ is compact, we can assume that $|\nabla v|^2$ attains its
maximum at some point $x_0 \in X$. Consider the following linear
elliptic operator
   \[
      L(h) = \Delta h + 2 \psi'(|\nabla v|^2) \langle dh, dv \rangle_{\omega} = \Delta h + \psi'(|\nabla v|^2) g^{i\bar{j}}(h_i v_{\bar{j}} + h_{\bar{j}} v_i),
   \]
   Here the summation convention is used, and we denote
   \[
      h_i = \frac{\partial h}{\partial z^i}, \quad g^{i\bar{j}}_{,k} = \frac{\partial g^{i\bar{j}}}{\partial z^k}, \quad \cdots.
   \]
    We compute that
   \begin{align*}
      L(|\nabla v|^2)
      & = \Delta (|\nabla v|^2) + \psi' g^{i\bar{j}} \big[(|\nabla v|^2)_i v_{\bar{j}} + v_i (|\nabla v|^2)_{\bar{j}} \big] \\
      & = g^{i\bar{j}}g^{p\bar{q}}(v_{pi}v_{\bar{q}\bar{j}}+ v_{p\bar{j}} v_{i\bar{q}})
       +g^{p\bar{q}} \big[(\Delta v)_p v_{\bar{q}} + v_p (\Delta v)_{\bar{q}}\big]
      +      g^{i\bar{j}} g^{p\bar{q}}_{,i\bar{j}} v_p v_{\bar{q}}
      \\
      & \quad + g^{i\bar{j}} g^{p\bar{q}}_{,i}(v_{p\bar{j}}v_{\bar{q}} + v_p v_{\bar{q}\bar{j}})
              + g^{i\bar{j}} g^{p\bar{q}}_{,\bar{j}}(v_{pi}v_{\bar{q}} + v_p v_{i\bar{q}}) \\
      & \quad -g^{p\bar q}(g^{i\bar j}_{,p}v_{i\bar j}v_{\bar q}+g^{i\bar j}_{,\bar q}v_pv_{i\bar j})
      + \psi' g^{i\bar{j}} \big[(|\nabla v|^2)_i v_{\bar{j}} + v_i (|\nabla v|^2)_{\bar{j}}
      \big].
   \end{align*}
   Using   equation \eqref{eq:sl} to the second term  on the far
   right of above equalities and then using the Schwarz inequality, we find
   $$L(|\nabla v|^2)\ge \frac{1}{2} g^{i\bar{j}}g^{p\bar{q}}(v_{pi}v_{\bar{q}\bar{j}}
   + v_{p\bar{j}} v_{i\bar{q}}) - C |\nabla v|^2 - C.$$

   To see more clearly, let us take a normal coordinate system around $x_0$ such that
   \[
      g_{i\bar{j}}(x_0) = \delta_{ij}, \qquad \textup{for all $i,j= 1, \ldots, n$}.
   \]
   It follows that
   \begin{align*}
      L(|\nabla v|^2) & \ge \frac{1}{2} \sum_{i,p=1}^n |v_{p\bar{i}}|^2 - C |\nabla v|^2 - C \\
        & \ge \frac{1}{2} \sum_{i=1}^n |v_{i\bar{i}}|^2 - C |\nabla v|^2 - C \\
        & \ge \frac{1}{2n} |\Delta v|^2 - C |\nabla v|^2 - C \qquad \textup{\Big(by Cauchy's inequality\Big)}\\
        & \ge \frac{1}{2n} \big|\psi(|\nabla v|^2) + \langle B, dv \rangle - f- c\big|^2 - C |\nabla v|^2 - C \qquad \textup{\Big(by \eqref{eq:sl}\Big)}\\
        & \ge \frac{1}{4n} \big|\psi(|\nabla v|^2)\big|^2 - C |\nabla v|^2 - C(1 + |c|^2).
   \end{align*}
   We can assume, without loss of generality, that $|\nabla v|^2(x_0)$ is sufficiently large so that
   \[
       \psi (|\nabla v|^2) \ge \frac{\nu}{2} |\nabla v|^{2\mu} \qquad \textup{at $x_0$},
   \]
   where $\mu > 1/2$ and $\nu > 0$ are constants, by \eqref{eq:defpsi}.
   Now notice that
   \[
       L(|\nabla v|^2) \le 0 \quad \textup{at $x_0$},
   \]
   because of
   \[
      \Delta (|\nabla v|^2)(x_0) \le 0, \qquad \textup{and $\quad \nabla (|\nabla v|^2)(x_0) = 0$}.
   \]
   Hence, we obtain that
   \[
       \sup_X |\nabla v|^2 = |\nabla v|^2(x_0) \le C (1 + |c|^2).
   \]
   It remains to bound the constant $c$ in terms of $f$ and $\psi(0)$: Apply the usual maximum principle to \eqref{eq:sl} to obtain that
   \begin{equation} \label{eq:bddc}
       \psi(0) - \sup_X f \le c \le - \inf_X f + \psi(0).
   \end{equation}
   This finishes the proof.
\end{proof}

 Next, we establish the $C^0$ estimate: Noticing
\eqref{eq:nm1}, there must exist some point $y_0 \in X$ such that
$v(y_0)=0$. Then, for any point $y \in X$, we take a geodesic curve
$\gamma$ connecting $y_0$ to $y$. We have by  Lemma~\ref{le:C1}
that,
\begin{eqnarray}
|v(y)|=|v(y)-v(y_0)| = \left| \int_0^1 \frac{d (v \circ \gamma)}{dt}
dt \right| \le \int_0^1 (|\nabla v|\circ \gamma) \, dt <C.\nonumber
\end{eqnarray}
This settles the $C^0$ estimate of $v$.

We rewrite  equation \eqref{eq:sl} as
\begin{eqnarray}
\triangle v= - \psi(|\nabla v|^2) - \langle B, dv \rangle + f +
c.\nonumber
\end{eqnarray}
By $W^{2,p}$ theory of elliptic equations, we have for any $p > 1$,
\begin{align*}
\|v\|_{W^{2,p}}
& \leqq C(\|v\|_{L^p}+\| f+ c - \psi(|\nabla v|^2) - \langle B, dv \rangle \|_{L^p}) \\
& \le C_1,
\end{align*}
where in the last inequality we have
used the $C^0$ and $C^1$ estimates of $v$, and \eqref{eq:bddc}.
Here and below, we denote by $C_1$ a generic constant depending on $B$, $f$, $\omega$, $\mu$, $\nu$, and also $p$, and $\max \{|\psi(t)|; 0 \le t \le \max |\nabla v|^2 \le C \}$.

Fix a sufficiently large $p$ such that $\alpha \equiv 2n/p < 1$. It
follows from the Sobolev embedding theorem that
\begin{eqnarray}
\|v\|_{C^{1,\alpha}}\leq C_1. \nonumber
\end{eqnarray}
This allows us to apply Schauder's theory to obtain that
\[
   \|v\|_{C^{2,\alpha}} \le C_1.
\]
Thus, by the bootstrap argument, we have
\begin{eqnarray}
\|v\|_{C^{l,\alpha}}\leq C_1, \qquad \textup{for any $k \ge 1$}.
\end{eqnarray}
This implies that the set $I$ defined in Section~\ref{se:NP} is closed. As a consequence, we have shown the existence part in Theorem~\ref{th:sl1}.

\section{Uniqueness and Corollary} \label{se:UC}
Let us prove the uniqueness in Theorem~\ref{th:sl1}. Suppose that there exist $c$, $v$ and $\tilde{c}$, $\tilde{v}$ such that
\begin{align*}
  \Delta v + \psi(|\nabla v|^2) + \langle B, d v\rangle & = f + c, \\
  \Delta \tilde{v} + \psi(|\nabla \tilde{v} |^2) + \langle B, d \tilde{v} \rangle & = f + \tilde{c}.
\end{align*}
Then,
\begin{align}
   c & = \frac{\int_X (\Delta v + \psi(|\nabla v|^2) + \langle B, dv \rangle) \omega^n}{\int_X \omega^n}, \label{eq:ctc1}\\
   \tilde{c} & = \frac{\int_X (\Delta \tilde{v} + \psi(|\nabla \tilde{v}|^2) + \langle B, d\tilde{v} \rangle)\omega^n}{\int_X \omega^n}. \label{eq:ctc2}
\end{align}
Recall that we denote
\[
    S(u) = \Delta u + \psi(|\nabla u|^2) + \langle B, d u \rangle - \frac{\int_X (\Delta u + \psi(|\nabla u|^2) + \langle B, d u \rangle) \omega^n}{\int_X \omega^n}, 
\]
for all $u \in C^2(X)$.
It follows that
\begin{align}
   0 & = S(v) - S(\tilde{v}) = \int_0^1 \left[\frac{d}{d t} S\big(t v + (1 - t) \tilde{v}\big)\right] dt \notag \\
   & = \Delta w + \langle \tilde{B}, d w\rangle - c_w. \label{eq:linS}
\end{align}
Here $w = v - \tilde{v}$, 
\[
   \tilde{B} = B + 2 \int_0^1 \psi'(|t \nabla v + (1 - t) \nabla \tilde{v}|^2) \, \big[ t d v + (1 - t) d \tilde{v}\big] dt,
\]
and $c_w$ is a constant given by
\[
   c_w = \frac{\int_X ( \Delta w + \langle \tilde{B}, d w \rangle ) \omega^n}{\int_X \omega^n}.
\]
Applying the maximum principle to \eqref{eq:linS} yields
\[
    c_w = 0.
\]
Then, by the strong maximum principle we conclude that $w$ is equal to a constant. This shows that the solution of \eqref{eq:sl1} is unique up to a constant. By \eqref{eq:ctc1} and \eqref{eq:ctc2} we have $c = \tilde{c}$. This completes the proof of Theorem~\ref{th:sl1}.

Let us now prove Corollary \ref{co:sl2}. We define a smooth real
1-form on $X$
\begin{equation}\lab{102}
B_1=\frac {\sqrt{-1}}{2}\frac{nk}{n-1}\frac
{1}{n!}\ast\bigl(\partial(\omega^{n-1})-\bar\partial(\omega^{n-1})\bigr)
\end{equation}
and a smooth function
\begin{equation} \label{eq:defphi}
   \varphi= \frac{n \ppr (\omega^k)\wedge \omega^{n-k-1}}{\omega^n}.
\end{equation}
 Then \eqref{eq:sl2} is equivalent to
\[
   \Delta v + |\nabla v|^2 + \langle B_1, d v\rangle + \varphi = n\gamma_k.
\]
Letting
\[
   \psi (t) = t \quad \textup{and \quad} f = \frac{\int_X \varphi \omega^n}{\int_X \omega^n} - \varphi,
\]
Corollary~\ref{co:sl2} then follows readily from Theorem~\ref{th:sl1}.


   For each $1 \le k \le n-1$, the constant $\gamma_k$ is given by
  \begin{align}
    \gamma_k
    & = \frac{\int_X e^{-v} \ppr (e^v \omega^k )\wedge \omega^{n-k-1}}{\int_X \omega^n} \label{eq:defgam} \\
    & =  \frac{\int_X (\Delta v + |\nabla v|^2 + \langle B_1, dv \rangle + \varphi)\omega^n}{n \int_X \omega^n} \label{eq:gam1}.
  \end{align}
On the other hand, directly integrating \eqref{eq:sl2} over $X$ yields that
\begin{equation} \label{eq:gam2}
   \gamma_k = \frac{\int_X \ppr (e^v \omega^k)\wedge \omega^{n-k-1}}{\int_X e^v \omega^n}.
\end{equation}
This together with \eqref{eq:defgam} imposes some constraint on the constant $\gamma_k$. For instance, when $k = n - 1$, by \eqref{eq:gam2} we know that
\[
   \gamma_{n-1} = 0.
\]
Thus, in this case Corollary~\ref{co:sl2} recovers the classic result of Gauduchon~\cite{Gau}. When $\omega$ is K\"ahler, by \eqref{eq:gam2} again we have
\[
   \gamma_k = 0 \qquad \textup{for all $1 \le k \le n-1$}.
\]
Then, it follows from \eqref{eq:gam1} that
\[
   \int_X |\nabla v|^2 \omega^n = 0.
\]
This tells us that the solution $v$ of \eqref{eq:sl2} has to be a constant.
\vspace{1mm}

\section{Generalized Gauduchon metrics and $\gamma_k$} \label{se:GG}

Let $X$ be an $n$-dimensional complex manifold. We recall by Definition~\ref{de:kGau} that
a hermitian metric $\omega$ on $X$ is called $k$-th Gauduchon metric if
\begin{equation*}
\p \bar{\p}(\omega^k)\wedge \omega^{n-k-1}=0 \qquad \textup{on $X$}.
\end{equation*}
Then, the $(n-1)$--th Gauduchon metric is the Gauduchon
metric in the usual sense. By Corollary~\ref{co:sl2}, each
hermitian metric $\omega$ on $X$ can be associated with a unique constant
$\gamma_k(\omega)$, which is invariant under biholomorphisms.
The induced function $\gamma_k = \gamma_k(\omega)$ can be used to characterize the $k$-th Gauduchon metric.
\begin{prop} \label{pr:kGau}
  The hermitian manifold $X$ admits a $k$-th Gauduchon metric
  if and only if that there exists a hermitian metric $\omega$ on $X$ such that
  \begin{equation} \label{cond}
     \gamma_k (\omega) = 0.
  \end{equation}
\end{prop}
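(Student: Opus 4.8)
The plan is to read everything off Corollary~\ref{co:sl2}, which attaches to each hermitian metric $\omega$ a unique constant $\gamma_k(\omega)$ together with a function $v\in C^{\infty}(X)$, unique up to a constant, solving
\[
   \ppr(e^v\omega^k)\wedge\omega^{n-k-1}=\gamma_k(\omega)\,e^v\omega^n .
\]
The proposition then reduces to two short computations, the second resting on a conformal rescaling. No continuity method or \emph{a priori} estimate is needed here, since all the analysis has already been absorbed into Corollary~\ref{co:sl2}.

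For the direction ``$\Rightarrow$'', suppose $X$ carries a $k$-th Gauduchon metric $\omega_0$, so that $\partial\bar\partial(\omega_0^k)\wedge\omega_0^{n-k-1}=0$ by Definition~\ref{de:kGau}. I would take $\omega=\omega_0$ and observe that the constant function $v\equiv 0$ already solves \eqref{eq:sl2} for $\omega_0$ with the value $\gamma_k=0$, since the left-hand side is then the multiple $\ppr(\omega_0^k)\wedge\omega_0^{n-k-1}=0$. By the uniqueness of the constant asserted in Corollary~\ref{co:sl2}, this forces $\gamma_k(\omega_0)=0$. Equivalently, one reads the same conclusion off formula~\eqref{eq:gam2}, whose numerator vanishes while $\int_X e^v\omega_0^n>0$.

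For the converse ``$\Leftarrow$'', suppose $\gamma_k(\omega)=0$ for some hermitian metric $\omega$, and let $v$ be the associated solution. Since the factor $\sqrt{-1}/2$ is nonzero, the vanishing of $\gamma_k$ in \eqref{eq:sl2} is equivalent to
\[
   \partial\bar\partial(e^v\omega^k)\wedge\omega^{n-k-1}=0 .
\]
I would then exhibit the desired metric inside the conformal class of $\omega$ by setting $\tilde\omega=e^{v/k}\omega$, which is again a genuine hermitian metric because $e^{v/k}>0$. Using $\tilde\omega^k=e^v\omega^k$ and $\tilde\omega^{n-k-1}=e^{(n-k-1)v/k}\omega^{n-k-1}$, and pulling the $0$-form $e^{(n-k-1)v/k}$ out of the wedge product as a scalar,
\[
   \partial\bar\partial(\tilde\omega^k)\wedge\tilde\omega^{n-k-1}
   =e^{(n-k-1)v/k}\,\partial\bar\partial(e^v\omega^k)\wedge\omega^{n-k-1}=0 ,
\]
which is exactly the $k$-th Gauduchon condition for $\tilde\omega$.

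The computation is routine; the one place that deserves care -- and the conceptual crux of the statement -- is this conformal factoring. The equation behind $\gamma_k(\omega)=0$ tests $\ppr(e^v\omega^k)$ against the \emph{background} factor $\omega^{n-k-1}$, whereas the genuine $k$-th Gauduchon condition for $\tilde\omega$ tests $\partial\bar\partial(\tilde\omega^k)$ against its \emph{own} factor $\tilde\omega^{n-k-1}$. These agree precisely because the discrepancy $e^{(n-k-1)v/k}$ is a positive function and hence passes through the wedge product without disturbing the vanishing; this is what makes the single scalar equation \eqref{eq:sl2} equivalent to the fully nonlinear $k$-th Gauduchon equation within a conformal class. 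I would close by noting that the metric produced is automatically conformal to $\omega$, which is the form in which the result is used later in the paper.
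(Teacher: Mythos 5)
Your proposal is correct and follows essentially the same route as the paper: the forward direction invokes the uniqueness of the constant in Corollary~\ref{co:sl2} (with $v\equiv 0$ solving \eqref{eq:sl2} for a $k$-th Gauduchon metric), and the converse exhibits the conformal metric $e^{v/k}\omega$, which is exactly the metric the paper uses. The only difference is that you write out the conformal factoring computation that the paper leaves implicit; the substance is identical.
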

\begin{proof}
If there is some hermitian metric $\omega$ satisfying \eqref{cond}, then
Corollary~\ref{co:sl2} implies that the conformal metric $e^{v/k}\omega$ is a
$k$-th Gauduchon metric on $X$. Conversely, if $\omega$ is a $k$-th Gauduchon metric,
then the uniqueness of Corollary~\ref{co:sl2} implies that $\gamma_k(\omega)=0$
and that $v$ is a constant.
\end{proof}
Let $\mathfrak{M}$ be the set of all hermitian metrics on $X$. We shall prove that
$\gamma_k$ is a smooth function on $\mathfrak{M}$. Here
$\mathfrak{M}$ is viewed as an open subset in $C^{l+2,\alpha}(\Lambda^{1,1}_{\mathbb{R}}(X))$, for a
nonnegative integer $l$ and a real number $0 < \alpha < 1$. 
We denote by
$C^{l,\alpha}(\Lambda^{m,m}_{\mathbb{R}}(X))$ the H\"older space of
real $(m,m)$--forms on $X$, in which $l$ and $m$ are nonnegative integers, and
$0 < \alpha<1$ is a real number. In particular, $C^{l,\alpha}(\Lambda^{0,0}_{\mathbb{R}}(X)) = C^{l,\alpha}(X)$.


\begin{prop} \label{pr:moduli}
The function $\gamma_k = \gamma_k(\omega)$ is a smooth function on
$\mathfrak{M}$, where $\mathfrak{M}$ is viewed as an open subset in $C^{l+2,\alpha}(\Lambda^{1,1}_{\mathbb{R}}(X))$.
\end{prop}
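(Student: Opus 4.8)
The plan is to apply the implicit function theorem for smooth maps of Banach spaces, treating $\omega$ as the parameter and $(v,\gamma_k)$ as the unknowns furnished by Corollary~\ref{co:sl2}. Recall from Section~\ref{se:UC} that, with $\psi(t)=t$, equation~\eqref{eq:sl2} is equivalent to $\Delta_\omega v + |\nabla v|^2 + \langle B_1, dv\rangle + \varphi = n\gamma_k$, where $B_1 = B_1(\omega)$ and $\varphi = \varphi(\omega)$ are given by \eqref{102} and \eqref{eq:defphi}. Set $\Phi(\omega, v) = \Delta_\omega v + |\nabla v|^2 + \langle B_1, dv\rangle + \varphi$ and define
\[
   H(\omega, v, \gamma) = \Big( \Phi(\omega, v) - n\gamma, \ \int_X v\, \omega^n \Big),
\]
a map $H : \mathfrak{M} \times C^{l+2,\alpha}(X) \times \mathbb{R} \to C^{l,\alpha}(X) \times \mathbb{R}$. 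A zero of $H$ is precisely a normalized solution pair $(v,\gamma)$ of \eqref{eq:sl2}, and the target space is fixed (independent of $\omega$), which is what makes the implicit function theorem applicable. Introducing $\gamma$ as an explicit unknown, rather than recovering it by averaging, is the device that keeps the target from varying with $\omega$.

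First I would check that $H$ is a smooth map of Banach spaces. Since $\mathfrak{M}$ consists of positive forms, it is open in $C^{l+2,\alpha}(\Lambda^{1,1}_{\mathbb{R}}(X))$; and on a neighborhood of any $\omega_0 \in \mathfrak{M}$ the inverse matrix $(g^{i\bar j})$ depends smoothly on $(g_{i\bar j})$, because matrix inversion is smooth on the open set of invertible matrices. Consequently $\Delta_\omega$, $\langle\,\cdot\,,\,\cdot\,\rangle_\omega$, the Hodge star, $B_1(\omega)$, and $\varphi(\omega)$ are all built from $g_{i\bar j}$, $g^{i\bar j}$, and finitely many of their derivatives through algebraic (polynomial in the entries, rational through the determinant) operations, so $(\omega, v) \mapsto \Phi(\omega, v)$ and $(\omega, v) \mapsto \int_X v\,\omega^n$ are smooth. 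The degree count $\omega \in C^{l+2,\alpha}$, $v \in C^{l+2,\alpha}$ gives $\Phi(\omega, v) \in C^{l,\alpha}(X)$, consistent with the stated target.

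The heart of the matter is the invertibility of the partial differential $D_{(v,\gamma)} H$ at a solution $(\omega_0, v_0, \gamma_0)$. A direct computation gives
\[
   D_{(v,\gamma)} H(\omega_0, v_0, \gamma_0)(h, s) = \Big( \Delta h + \langle \tilde{B}, d h\rangle - n s, \ \int_X h\, \omega^n \Big), \qquad \tilde{B} = B_1 + 2\, dv_0,
\]
which is exactly the linearized operator of Section~\ref{se:NP} (here $\psi' \equiv 1$) augmented by the parameter $s$ and the normalization functional. I would argue the isomorphism as follows. The operator $M(h) = \Delta h + \langle \tilde{B}, dh\rangle$ is second order and elliptic with no zeroth order term, so by the strong maximum principle its kernel consists of the constants and, by Fredholm theory, its image is the codimension-one subspace $\{\phi : \int_X \phi\, \rho\,\omega^n = 0\}$ for a positive density $\rho$ spanning the kernel of the formal adjoint. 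This is the same Fredholm structure underlying the isomorphism of $L_\omega$ in Lemma 13 of \cite{FuWangWu}. Given data $(\phi, b) \in C^{l,\alpha}(X) \times \mathbb{R}$, the scalar $s$ is uniquely determined by requiring $\phi + n s$ to lie in the image of $M$ (which forces $n s \int_X \rho\,\omega^n = -\int_X \phi\,\rho\,\omega^n$, solvable since $\int_X \rho\,\omega^n > 0$); then $M(h) = \phi + ns$ is solvable with $h$ unique up to a constant, and that constant is fixed by $\int_X h\,\omega^n = b$. Hence $D_{(v,\gamma)}H$ is a continuous bijection between Banach spaces, therefore an isomorphism by the open mapping theorem. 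This step — using the extra unknown $\gamma$ precisely to absorb the one-dimensional solvability obstruction of $M$ — is the crux, and the place where the cited invertibility from \cite{FuWangWu} is essential.

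With smoothness of $H$ and invertibility of $D_{(v,\gamma)}H$ in hand, the implicit function theorem yields, near each $\omega_0$, a unique smooth map $\omega \mapsto (v(\omega), \gamma(\omega))$ with $H(\omega, v(\omega), \gamma(\omega)) = 0$. By the uniqueness in Corollary~\ref{co:sl2}, $\gamma(\omega)$ coincides with the globally defined $\gamma_k(\omega)$ on this neighborhood, so these local smooth functions patch together to show that $\gamma_k$ is smooth on all of $\mathfrak{M}$.
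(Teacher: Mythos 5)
Your proof is correct, and while it shares the paper's broad strategy (the implicit function theorem on Banach spaces applied to the equation characterizing $(v,\gamma_k)$), the implementation is genuinely different. The paper eliminates $\gamma_k$ altogether: it defines a map $F(\omega,v)$ landing in the fixed mean-zero space $\mathcal{E}^{l,\alpha}_0$ of \eqref{101} by building the $\omega$-average subtraction into $F$ and using the two isomorphisms \eqref{eq:shift} and \eqref{eq:multi} to identify $\mathcal{E}^{l,\alpha}_{\omega}$ with $\mathcal{E}^{l,\alpha}_0$; the unknown is $v$ alone, and the invertibility of $D_vF(\omega,v)(h) = L_{\omega}(h)\,\omega^n/\omega_0^n$ is quoted from Lemma 13 of \cite{FuWangWu}. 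You instead keep $\gamma$ as an explicit scalar unknown and append the normalization $\int_X v\,\omega^n$ as an extra scalar equation, producing a bordered system on $C^{l+2,\alpha}(X)\times\mathbb{R}$ with fixed target $C^{l,\alpha}(X)\times\mathbb{R}$, and you prove invertibility of the bordered linearization directly from the Fredholm alternative, the strong maximum principle, and the open mapping theorem. The two devices for keeping the target independent of $\omega$ (the paper's multiplication isomorphism versus your scalar augmentation) are interchangeable, and your version has the merit of making transparent that $\gamma$ exactly absorbs the one-dimensional cokernel of $M = \Delta + \langle \tilde{B}, d\,\cdot\,\rangle$. The one input you assert rather than prove is that the kernel of the formal adjoint $M^{*}$ is spanned by a \emph{positive} density $\rho$; this is essential, since without $\int_X \rho\,\omega^n \neq 0$ your solvability count for $s$ breaks down. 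That positivity is a standard fact (it is the crux of Gauduchon's classical argument and is precisely what the invertibility statement in Lemma 13 of \cite{FuWangWu} encapsulates), and since the paper itself disposes of the corresponding step by citing that lemma, your level of justification is on par with the paper's; a fully self-contained write-up would add the Krein--Rutman/maximum-principle argument for the positivity of $\rho$. Your final patching step, using the uniqueness of $\gamma_k$ in Corollary~\ref{co:sl2} to identify the local IFT output with the global function $\gamma_k$, is exactly right and is implicitly used in the paper as well.
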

\begin{proof}
It follows from Corollary~\ref{co:sl2} that, for each $\omega \in \mathfrak{M}$,
there exists a unique constant $\gamma_k$ and a function $v$ such that
\begin{equation} \label{eq:sl3}
   e^{-v} \ppr (e^v \omega^k) \wedge \omega^{n-k-1} - \gamma_k \omega^n = 0.
\end{equation}
Then,
\[
   \gamma_k = \frac{\int_X e^{-v} \ppr (e^v \omega^k)\wedge \omega^{n-k-1}}{\int_X \omega^n}
\]
depends smoothly on $v$ and $\omega$. Thus, to show the result, it suffices
to show that the solution $v$ depends smoothly on $\omega$. We shall use the implicit function theorem.

For each $\omega \in \mathfrak{M}$, the space
$\mathcal{E}^{l,\alpha}_{\omega}$ is defined  by (\ref{101}).
Fix $\omega_0 \in \mathfrak{M}$, for which we abbreviate
$\mathcal{E}^{l,\alpha}_0= \mathcal{E}^{l,\alpha}_{\omega_0}$. We
have two obvious linear isomorphisms
from $\mathcal{E}^{l,\alpha}_{\omega}$ to
$\mathcal{E}^{l,\alpha}_0$, given respectively by
\begin{equation} \label{eq:shift}
   h \longmapsto h - \frac{\int_X h \omega_0^n}{\int_X \omega_0^n},
   \qquad \textup{for all $h \in \mathcal{E}^{l,\alpha}_{\omega}$},
\end{equation}
and
\begin{equation} \label{eq:multi}
   h \longmapsto h \cdot \frac{\omega^n}{\omega_0^n} \qquad \textup{for all $h \in \mathcal{E}^{l,\alpha}_{\omega}$}.
\end{equation}

Define a map $F: \mathfrak{M}\times \mathcal{E}^{l+2,\alpha}_0 \to
\mathcal{E}^{l,\alpha}_0 $ by 
\begin{align*}
 F(\omega, v) & = \frac{n e^{-v}\ppr
(e^v\omega^{k}) \wedge \omega^{n-k-1}}{\omega_0^n} \\
  & \quad - \frac{n\int_X e^{-v} \ppr (e^v \omega^k) \wedge \omega^{n-k-1}}{\int_X \omega^n} \cdot \frac{\omega^n}{\omega^n_0}.
\end{align*}
Obviously, $F$ is a smooth map. Note that any $(\omega,v)\in
\mathfrak{M}\times \mathcal{E}_0^{l+2,\alpha}$ satisfies
\eqref{eq:sl3} if and only if
\[
   F(\omega, v) = 0.
\]
The Fr\'echet derivative of $F$ with respect to
the variable $v$ is
$$D_v F(\omega,v) (h) =L_{\omega}(h)\frac{\omega^n}{\omega_0^n}.$$
Here
\[
   L_{\omega} (h)  = \Delta h + \langle B_1+2dv, dh \rangle_{\omega}
   - \frac{\int_X (\Delta h + \langle B_1+2dv, d h \rangle_{\omega}) \omega^n}{\int_X \omega^n},
\]
in which the Laplacian $\Delta$ is with respect to $\omega$,
and $B_1$ is the smooth real $1$-form given by \eqref{102}.
By the proof of Lemma 13 in \cite{FuWangWu} and the isomorphism
\eqref{eq:shift}, the operator $L_{\omega} :
\mathcal{E}^{l+2,\alpha}_{0}\to \mathcal{E}^{l,\alpha}_{\omega}$ is
a linear isomorphism. This combining isomorphism \eqref{eq:multi}
imply that $D_v F(\omega,v): \mathcal{E}^{l+2,\alpha}_0 \to
\mathcal{E}^{l,\alpha}_0$ is a linear isomorphism. The result then
follows by the Implicit Function Theorem.
\end{proof}
A direct corollary of Proposition~\ref{pr:moduli} is as below.
\begin{coro}\label{cor0}
For $1 \le k \le n-2$, if there exists two hermitian metric $\omega_1,\omega_2$ on $X$
such that
$$\gamma_k(\omega_1)>0\ \ \text{and} \ \ \gamma_k(\omega_2)<0,$$
then there exists a metric $\omega$ on $X$ satisfying $\gamma_k(\omega)=0$, i.e., $\omega$ is a $k$-th Gauduchon metric.
\end{coro}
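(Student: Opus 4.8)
The plan is to combine the continuity of $\gamma_k$ established in Proposition~\ref{pr:moduli} with the convexity of the space of hermitian metrics, and then to apply the intermediate value theorem along a path joining $\omega_1$ to $\omega_2$. This is why the statement is labelled a direct corollary: all the analytic content has already been absorbed into the smooth dependence of $\gamma_k$ on $\omega$.

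First I would record that $\mathfrak{M}$ is convex. Indeed, if $\omega_1$ and $\omega_2$ are hermitian metrics, then for each $t \in [0,1]$ the real $(1,1)$-form $\omega_t = (1-t)\omega_1 + t\omega_2$ is again positive definite, since positivity of hermitian forms is preserved under convex combinations; hence $\omega_t \in \mathfrak{M}$. This gives a path $t \mapsto \omega_t$ in $\mathfrak{M}$ from $\omega_1$ to $\omega_2$ that is continuous (in fact smooth) as a curve in $C^{l+2,\alpha}(\Lambda^{1,1}_{\mathbb{R}}(X))$, the space in which $\mathfrak{M}$ is regarded as an open subset.

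Next, by Proposition~\ref{pr:moduli} the function $\gamma_k$ is smooth on $\mathfrak{M}$, so the composition $g(t) = \gamma_k(\omega_t)$ is continuous on $[0,1]$. By hypothesis $g(0) = \gamma_k(\omega_1) > 0$ and $g(1) = \gamma_k(\omega_2) < 0$, so by the intermediate value theorem there exists $t_0 \in (0,1)$ with $g(t_0) = 0$, that is, $\gamma_k(\omega_{t_0}) = 0$. Setting $\omega = \omega_{t_0}$ produces a hermitian metric on $X$ with $\gamma_k(\omega) = 0$, and Proposition~\ref{pr:kGau} then shows that the conformal metric $e^{v/k}\omega$ is a $k$-th Gauduchon metric, where $v$ is the solution attached to $\omega$ by Corollary~\ref{co:sl2}.

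There is essentially no obstacle in this argument; the only point requiring a moment's care is to check that the straight-line homotopy remains inside $\mathfrak{M}$, which is immediate from the convexity of the cone of positive $(1,1)$-forms. The entire substance of the corollary lies in the two inputs already proved, namely the continuity of $\gamma_k$ (Proposition~\ref{pr:moduli}) and the characterization of $k$-th Gauduchon metrics via the vanishing of $\gamma_k$ (Proposition~\ref{pr:kGau}).
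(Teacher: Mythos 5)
Your proof is correct and is essentially the paper's own argument: the paper likewise takes the linear path $\omega_t = t\omega_1 + (1-t)\omega_2$, which stays in $\mathfrak{M}$ by convexity of the positive cone, and applies the intermediate value theorem (misnamed the ``Mean Value Theorem'' there) to $\phi(t) = \gamma_k(\omega_t)$, whose continuity comes from Proposition~\ref{pr:moduli}. Your closing appeal to Proposition~\ref{pr:kGau}, converting $\gamma_k(\omega_{t_0})=0$ into an actual $k$-th Gauduchon metric $e^{v/k}\omega_{t_0}$, is a small point of precision that the paper's statement glosses over but is the right way to read the conclusion.
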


\begin{proof}
Let
$$\omega_t=t\omega_1+(1-t)\omega_2, \qquad \textup{for all $0 \le t \le 1$}.$$
Then $\omega_t$ is a hermitian metric for each $t$. The result follows immediately by applying
the Mean Value Theorem to the function $\phi(t) = \gamma_k (\omega_t)$.
\end{proof}

\begin{prop} \label{pr:cfinv}
  For any function $\rho \in C^2(M)$, we have
  \begin{equation} \label{eq:compv}
     e^{-\max_X \rho} \gamma_k(\omega) \le \gamma_k (e^{\rho}\omega) \le e^{-\min_X \rho} \gamma_k(\omega).
  \end{equation}
  In particular, the sign of the function $\gamma_k$ is a conformal invariant for hermitian metrics.
\end{prop}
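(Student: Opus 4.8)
The plan is to translate both defining equations into a single nonlinear PDE on the fixed background $(X,\omega)$ and then to compare the two solutions by the maximum principle. Recall from the proof of Corollary~\ref{co:sl2} that, for \emph{any} smooth function $w$, the purely algebraic Leibniz expansion of $\ppr(e^w\omega^k)$ gives the identity
\[
   \frac{n\, e^{-w}\ppr(e^w\omega^k)\wedge\omega^{n-k-1}}{\omega^n} = \Delta w + |\nabla w|^2 + \langle B_1, dw\rangle + \varphi,
\]
where $\Delta$, $\nabla$, $B_1$, and $\varphi$ are the quantities attached to $\omega$ by \eqref{102} and \eqref{eq:defphi}. In particular the solution $v$ associated with $\omega$ in Corollary~\ref{co:sl2} satisfies $\Delta v + |\nabla v|^2 + \langle B_1, dv\rangle + \varphi = n\gamma_k(\omega)$, a constant.

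Next I would rewrite equation \eqref{eq:sl3} for the conformal metric $\tilde\omega = e^{\rho}\omega$ entirely in terms of $\omega$. Using $\tilde\omega^k = e^{k\rho}\omega^k$, $\tilde\omega^{n-k-1} = e^{(n-k-1)\rho}\omega^{n-k-1}$, and $\tilde\omega^n = e^{n\rho}\omega^n$, and letting $\hat v$ denote the solution associated with $\tilde\omega$, the substitution $u = \hat v + k\rho$ absorbs the factor $e^{k\rho}$ into the exponent; after cancelling the common power $e^{(n-1)\rho}$ the equation \eqref{eq:sl3} for $\tilde\omega$ reduces to
\[
   e^{-u}\ppr(e^u\omega^k)\wedge\omega^{n-k-1} = \gamma_k(\tilde\omega)\, e^{\rho}\,\omega^n,
\]
which by the identity above is equivalent to $\Delta u + |\nabla u|^2 + \langle B_1, du\rangle + \varphi = n\gamma_k(\tilde\omega)e^{\rho}$, with every operator now the one attached to the single metric $\omega$. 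Carefully verifying this reduction, in particular the bookkeeping of the powers of $e^{\rho}$ produced by the wedge products, is the first technical step.

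I would then apply the maximum principle to $\Phi = u - v$ on the compact manifold $X$. At a maximum point $x_0$ of $\Phi$ one has $\nabla u = \nabla v$ and $\Delta u \le \Delta v$, so the first-order terms $|\nabla\cdot|^2$ and $\langle B_1, d\cdot\rangle$, as well as the common term $\varphi$, agree at $x_0$; subtracting the two equations leaves only the Laplacian terms and yields $\gamma_k(\tilde\omega)\,e^{\rho(x_0)} \le \gamma_k(\omega)$. Arguing at a minimum point $y_0$ of $\Phi$ gives the reverse inequality $\gamma_k(\tilde\omega)\,e^{\rho(y_0)} \ge \gamma_k(\omega)$. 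The decisive point, which I expect to be the main obstacle to phrase cleanly, is exactly this cancellation of the nonlinear gradient terms at the critical points of $\Phi$: it is what lets a genuinely nonlinear equation be compared by the ordinary maximum principle.

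Finally, these two pointwise inequalities sandwich $\gamma_k(\omega)$ between $\gamma_k(\tilde\omega)\,e^{\rho(x_0)}$ and $\gamma_k(\tilde\omega)\,e^{\rho(y_0)}$, where $e^{\rho(x_0)}, e^{\rho(y_0)} \in [\,e^{\min_X\rho}, e^{\max_X\rho}\,]$. Because these factors are strictly positive, they force $\gamma_k(\omega)$ and $\gamma_k(\tilde\omega)$ to vanish simultaneously, and otherwise to share the same sign; this is precisely the conformal invariance of the sign of $\gamma_k$. Replacing $e^{\rho(x_0)}$ and $e^{\rho(y_0)}$ by the global extrema $e^{\min_X\rho}$ and $e^{\max_X\rho}$, while tracking the sign of $\gamma_k(\omega)$ when the inequalities are multiplied through by these positive constants, then delivers the quantitative bounds \eqref{eq:compv}.
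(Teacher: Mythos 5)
Your proposal is correct and follows essentially the same route as the paper: both rewrite the defining equation for $e^{\rho}\omega$ over the fixed background $\omega$ via the substitution $u = \tilde{v} + k\rho$ (the paper's equation \eqref{eq:tlv}), subtract the equation $\Delta v + |\nabla v|^2 + \langle B_1, dv\rangle + \varphi = n\gamma_k(\omega)$, and compare by the maximum principle at the extrema of $u - v$, where the gradient terms cancel. The only difference is expository: you spell out the two pointwise inequalities and the sign bookkeeping that the paper compresses into the single phrase ``applying the maximum principle yields \eqref{eq:compv}.''
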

\begin{proof}
  Let $\tilde{\omega} = e^{\rho}\omega$. Then, there exists a function $\tilde{v}$ and a number $\tilde{\gamma}_k = \gamma_k(\tilde{\omega})$ satisfying
  \[
     \ppr (e^{\tilde{v}} \tilde{\omega}^k) \wedge \tilde{\omega}^{n-k-1} = \tilde{\gamma}_k e^{\tilde{v}} \tilde{\omega}^n,
  \]
  that is,
  \begin{equation}\label{eq:torho}
     \ppr (e^{\tilde{v} + k \rho} \omega^k) \wedge \omega^{n-k-1} = \tilde{\gamma}_k e^{\tilde{v} + k \rho} e^{\rho} \omega^n.
  \end{equation}
  We can rewrite \eqref{eq:torho} as
  \begin{equation} \label{eq:tlv}
     \Delta (\tilde{v} + k \rho) + |\nabla (\tilde{v} + k \rho)|^2 + \langle B_1, d (\tilde{v} + k \rho) \rangle + \varphi =n e^{\rho} \tilde{\gamma},
  \end{equation}
  where the operators $\Delta$ and $\nabla$ are with respect to $\omega$, and $B_1$ and $\varphi$ are given
  by (\ref{102}) and \eqref{eq:defphi}, respectively. Subtracting \eqref{eq:tlv} by
  \[
     \Delta v + |\nabla v|^2 + \langle B_1, d v\rangle + \varphi = n\gamma_k(\omega)
  \]
  and then applying the maximum principle yields \eqref{eq:compv}.
\end{proof}

\begin{prop} \label{pr:0term}
  For a hermitian metric $\omega$, the number $\gamma_k(\omega) > 0$ $(= 0$, or $< 0)$ if and only if there exists a metric $\tilde{\omega}$ in the conformal class of $\omega$ such that
  \begin{equation} \label{eq:0term}
     \ppr \tilde{\omega}^k \wedge \tilde{\omega}^{n-k-1} > 0 \, \textup{$( = 0$, or $<0$)} \quad \textup{on $X$}.
  \end{equation}
\end{prop}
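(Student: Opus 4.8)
The plan is to reduce both implications to the single constant $\gamma_k$ produced by Corollary~\ref{co:sl2}, and to transport signs across the conformal class by means of Proposition~\ref{pr:cfinv}. I would treat the three cases ($>0$, $=0$, $<0$) in parallel rather than separately.

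For the ``only if'' direction I would exhibit an explicit conformal representative. Let $v$ be the solution of \eqref{eq:sl2} attached to $\omega$, and set $\tilde\omega = e^{v/k}\omega$, so that $\tilde\omega^k = e^v\omega^k$. Substituting and using \eqref{eq:sl2} together with $\omega^n = e^{-nv/k}\tilde\omega^n$ gives
\[
   \ppr\tilde\omega^k\wedge\tilde\omega^{n-k-1} = e^{(n-k-1)v/k}\,\ppr(e^v\omega^k)\wedge\omega^{n-k-1} = \gamma_k(\omega)\,e^{-v/k}\,\tilde\omega^n,
\]
where the exponents in the middle collapse to $-v/k$. Since $e^{-v/k}\tilde\omega^n$ is a strictly positive volume form, the form $\ppr\tilde\omega^k\wedge\tilde\omega^{n-k-1}$ is everywhere positive, identically zero, or everywhere negative precisely according to the sign of $\gamma_k(\omega)$. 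This disposes of all three cases of the forward implication simultaneously; the only work here is the routine exponent bookkeeping.

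For the converse I would begin from a conformal metric $\tilde\omega = e^{\rho}\omega$ with $\ppr\tilde\omega^k\wedge\tilde\omega^{n-k-1}$ of a definite sign, deduce the matching sign of $\gamma_k(\tilde\omega)$, and then invoke Proposition~\ref{pr:cfinv}, which gives $\operatorname{sgn}\gamma_k(\omega)=\operatorname{sgn}\gamma_k(\tilde\omega)$. Applying Corollary~\ref{co:sl2} to $\tilde\omega$ furnishes $\tilde v$ and $\gamma_k(\tilde\omega)$, and, exactly as in the proof of that corollary, the equation is equivalent to the scalar identity
\[
   \tilde\Delta\tilde v + |\tilde\nabla\tilde v|^2 + \langle\tilde B_1, d\tilde v\rangle + \tilde\varphi = n\,\gamma_k(\tilde\omega), \qquad \tilde\varphi = \frac{n\,\ppr\tilde\omega^k\wedge\tilde\omega^{n-k-1}}{\tilde\omega^n},
\]
with $\tilde\Delta,\tilde\nabla,\tilde B_1$ formed from $\tilde\omega$. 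Evaluating at a minimum point of $\tilde v$, where $d\tilde v=0$ and $\tilde\Delta\tilde v\ge 0$, yields $n\gamma_k(\tilde\omega)\ge\tilde\varphi$, so $\tilde\varphi>0$ everywhere forces $\gamma_k(\tilde\omega)>0$; evaluating instead at a maximum point, where $\tilde\Delta\tilde v\le 0$, gives $n\gamma_k(\tilde\omega)\le\tilde\varphi$, so $\tilde\varphi<0$ everywhere forces $\gamma_k(\tilde\omega)<0$. The borderline case $\tilde\varphi\equiv 0$ means $\tilde\omega$ is itself a $k$-th Gauduchon metric, whence $\gamma_k(\tilde\omega)=0$ by Proposition~\ref{pr:kGau}.

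I expect the converse to be the main obstacle. Unlike the forward direction, one cannot simply integrate to read off the sign of $\gamma_k$: the defining relation \eqref{eq:sl2} carries the weight $e^{\tilde v}$, and integrating by parts does not eliminate the uncontrolled derivatives of $\tilde\omega$. The clean substitute is the pointwise maximum principle applied to the reformulated scalar equation above, which converts a sign hypothesis on $\tilde\varphi$ into the same sign for the \emph{constant} $\gamma_k(\tilde\omega)$; feeding this into the conformal sign-invariance of Proposition~\ref{pr:cfinv} then returns the asserted sign of $\gamma_k(\omega)$ and closes the argument.
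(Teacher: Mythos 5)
Your proof is correct and follows essentially the same route as the paper's: the forward direction via the conformal representative $\tilde\omega = e^{v/k}\omega$ and the defining equation \eqref{eq:sl2}, and the converse via the maximum principle applied to the scalar reformulation of Corollary~\ref{co:sl2} for $\tilde\omega$, followed by the conformal sign-invariance of Proposition~\ref{pr:cfinv}. (Your exponent bookkeeping, giving $\gamma_k(\omega)e^{-v/k}\tilde\omega^n$, is in fact more careful than the paper's displayed factor $e^{(n-k)v}\omega^n$, which contains a harmless slip; either way the sign conclusion is identical.)
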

\begin{proof}
  Suppose that $\gamma_k(\omega)>0$ $(=0$, or $< 0$). Let $\tilde{\omega} = e^{v/k} \omega$,
  where $v$ is the smooth function associated with $\omega$ so that \eqref{eq:sl2} holds. Then,
  \[
     \ppr \tilde{\omega}^k \wedge \tilde{\omega}^{n-k-1} = \gamma_k(\omega) \omega^n e^{(n-k)v} > 0 \, \textup{$(=0$, or $<0$)}.
  \]
  Conversely, if there is a metric $\tilde{\omega}$ in the conformal class of $\omega$ such that \eqref{eq:0term} holds, then we claim that $\gamma_k(\tilde{\omega}) > 0$ $(=0$, or $<0$). Indeed, by Corollary~\ref{co:sl2} there exists a smooth function $\tilde{v}$ such that
  \[
     \ppr (e^{\tilde{v}}\tilde{\omega}^k) \wedge \tilde{\omega}^{n-k-1} = \gamma_k(\tilde{\omega}) e^{\tilde{v}}\tilde{\omega}.
  \]
  This is equivalent to the following equation
  \begin{equation} \label{eq:tform}
     \Delta \tilde{v} + |\nabla \tilde{v}|^2 + \langle \tilde B_1, d\tilde{v} \rangle + \tilde\varphi = n\gamma_k(\tilde{\omega}),
  \end{equation}
  where the operators $\Delta$ and $\nabla$ are with respect to $\tilde{\omega}$, and $\tilde B_1$
  and $\tilde\varphi$ 
  are given by (\ref{102}) and \eqref{eq:defphi}, respectively,
  with $\tilde\omega$  replacing $\omega$.
  By \eqref{eq:0term} we have $\tilde{\varphi} >0$ $(=0$, or $<0)$. The claim then follows immediately by applying the maximum principle to \eqref{eq:tform}. By Proposition~\ref{pr:cfinv}, we finish the proof.
\end{proof}

Moreover, for the case of $\gamma_k >0$, we have the following criteria on the integration,
which is often easier to verify.
\begin{lemm}\label{10}
Suppose that $n$, the complex dimension of $X$, is an odd number. Let $k = (n-1)/2$.
Then, there is some metric $\omega$ satisfying $\gamma_k(\omega)>0$ if
and only if there is some semi-metric $\mathring{\omega}$ \textup{(}i.e., semi-positive
real $(1,1)$-form on $X$\textup{)} satisfying
$$\fr \int_X  \p \bar{\p} \mathring{\omega}^k \wedge\mathring{\omega}^{n-k-1} >0$$
\end{lemm}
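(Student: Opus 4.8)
The plan is to reduce the existence of a positive $\gamma_k$ to the sign of a \emph{single integral}, using the special balance $n - k - 1 = k$ that holds precisely when $n$ is odd and $k = (n-1)/2$. The forward direction is essentially free: if $\gamma_k(\omega) > 0$ for some genuine metric $\omega$, then Proposition~\ref{pr:0term} produces a metric $\tilde\omega$ in its conformal class with $\pp \, \tilde\omega^k \wedge \tilde\omega^{n-k-1} > 0$ pointwise on $X$. Integrating this positive $(n,n)$-form gives a strictly positive integral, and since $\tilde\omega$ is in particular a semi-metric, I would simply take $\mathring\omega = \tilde\omega$ to satisfy the integral condition. (Here I use that $(\sqrt{-1}/2)\int_X \p\bar\p \mathring\omega^k \wedge \mathring\omega^{n-k-1}$ and $\int_X \ppr \mathring\omega^k \wedge \mathring\omega^{n-k-1}$ agree up to a fixed positive normalizing factor.)

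The substantive direction is the converse. Suppose a semi-metric $\mathring\omega$ satisfies $\fr \int_X \p\bar\p \mathring\omega^k \wedge \mathring\omega^{n-k-1} > 0$. The first step is to regularize: since $\mathring\omega$ is only semi-positive, I would fix any background hermitian metric $\omega_0$ and set $\omega_\epsilon = \mathring\omega + \epsilon \omega_0$, which is a genuine metric for every $\epsilon > 0$. By continuity of the integrand in $\epsilon$, the integral $\fr \int_X \p\bar\p \omega_\epsilon^k \wedge \omega_\epsilon^{n-k-1}$ converges to $\fr \int_X \p\bar\p \mathring\omega^k \wedge \mathring\omega^{n-k-1} > 0$ as $\epsilon \to 0$; hence for all sufficiently small $\epsilon$ this integral is still strictly positive. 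Thus it suffices to prove the statement for an honest metric $\omega$ with $\fr \int_X \p\bar\p \omega^k \wedge \omega^{n-k-1} > 0$.

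The key step is to connect this integral positivity to the sign of $\gamma_k(\omega)$ itself. Here I would exploit the symmetry $k = n-k-1$ together with formula~\eqref{eq:defgam} for $\gamma_k$. Writing $\tilde\omega = e^{v/k}\omega$ for the associated solution of~\eqref{eq:sl2}, the identity in the proof of Proposition~\ref{pr:0term} gives $\ppr \tilde\omega^k \wedge \tilde\omega^{n-k-1} = \gamma_k(\omega)\, e^{(n-k)v}\omega^n$. Integrating this over $X$, the left side equals $\fr \int_X \p\bar\p(e^v \omega^k)\wedge \omega^{n-k-1}$ up to the normalization, and I would compare it with $\fr \int_X \p\bar\p \omega^k \wedge \omega^{n-k-1}$. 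The crucial point is that when $k = n-k-1$, a Stokes-type argument (integration by parts moving $\p\bar\p$ off the conformal factor) shows that the conformal weight $e^v$ can be redistributed symmetrically, so that the \emph{sign} of $\fr \int_X \p\bar\p(e^v\omega^k)\wedge\omega^{n-k-1}$ is forced to agree with the sign of $\fr \int_X \p\bar\p\omega^k\wedge\omega^{n-k-1}$. Since the right side integrates to $\gamma_k(\omega)\int_X e^{(n-k)v}\omega^n$ and the exponential weight is strictly positive, positivity of the integral forces $\gamma_k(\omega) > 0$.

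The main obstacle I anticipate is precisely this last sign-comparison: controlling how the conformal factor $e^v$ interacts with the bidegree-$(k,k)$ wedge structure under integration by parts. The balance $k = n-k-1$ is what makes the two slots symmetric and lets the integration by parts close, so the argument is genuinely special to the middle-dimensional case; I would need to verify carefully that the boundary-free Stokes identity on the compact $X$ eliminates the cross terms and leaves a form whose integral has a definite sign. If a clean integration-by-parts identity proves elusive, a fallback is to combine the regularization above with Proposition~\ref{pr:cfinv} (conformal sign-invariance of $\gamma_k$) and Proposition~\ref{pr:0term}, approximating to pass from the integral inequality to the pointwise inequality~\eqref{eq:0term} for a nearby conformal representative.
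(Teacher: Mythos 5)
Your skeleton coincides with the paper's: necessity via Proposition~\ref{pr:kGau}/\ref{pr:0term}, regularization $\omega_\epsilon=\mathring\omega+\epsilon\omega_0$ (the paper uses $\omega_t=\mathring\omega+t\hat\omega$ and an expansion in $t$, which amounts to the same continuity argument), and then an integration-by-parts step special to $k=(n-1)/2$. But the crux is exactly the step you leave unproven, and what you assert it would give is not what the computation yields. Expanding the \emph{weighted} integral from \eqref{eq:defgam}, namely $\gamma_k(\omega)\int_X\omega^n=\int_X e^{-v}\ppr(e^v\omega^k)\wedge\omega^{n-k-1}$, using $\p\omega^k\wedge\omega^{n-k-1}=\frac{k}{n-1}\p\omega^{n-1}$ and Stokes' theorem, one obtains (this is \eqref{eq:gamint1})
\begin{align*}
\gamma_k(\omega)\int_X\omega^n
&= \fr\int_X\p\bar\p\omega^k\wedge\omega^{n-k-1}
 + \fr\int_X\p v\wedge\bar\p v\wedge\omega^{n-1}\\
&\quad + \fr\Big(1-\frac{2k}{n-1}\Big)\int_X v\,\p\bar\p\omega^{n-1}.
\end{align*}
The linear-in-$v$ cross terms do cancel precisely when $k=(n-1)/2$, as you guessed; but what survives is not zero: it is the gradient term $\fr\int_X\p v\wedge\bar\p v\wedge\omega^{n-1}\ge 0$. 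Hence the conclusion is only the one-sided inequality $\gamma_k(\omega)\int_X\omega^n\ge\fr\int_X\p\bar\p\omega^k\wedge\omega^{n-k-1}$, which is all the lemma needs. Your stated claim of \emph{sign agreement} — which, via \eqref{eq:gam2}, is literally $\mathrm{sgn}\,\gamma_k(\omega)=\mathrm{sgn}\,\fr\int_X\p\bar\p\omega^k\wedge\omega^{n-k-1}$ — does not follow and should not be expected: when the right-hand integral is negative, the gradient term can still force $\gamma_k>0$. Moreover, the cancellation requires the $e^{-v}$-weighted integrand of \eqref{eq:defgam}; if you work with the unweighted integral \eqref{eq:gam2}, as your write-up does, the factor $e^v$ survives under the integral and the cross terms no longer assemble into $\int_X v\,\p\bar\p\omega^{n-1}$, so the middle-dimension cancellation is not available in that formulation. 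Since you neither derive the identity nor identify the surviving gradient term, the proof is not complete.

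Your fallback plan is also a dead end, so it cannot rescue the gap: passing "by approximation" from the integral inequality to the pointwise inequality \eqref{eq:0term} for some conformal representative is impossible in principle, because by Proposition~\ref{pr:0term} that pointwise statement is \emph{equivalent} to $\gamma_k>0$ — it is the conclusion itself, not an intermediate station reachable by continuity. The regularization half of your argument is fine; what is missing is the actual identity above, i.e., the verification that at $k=(n-1)/2$ the only $v$-dependence left after integration by parts is a term of favorable sign.
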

\begin{proof}
By Proposition~\ref{pr:0term}, the necessary part is obvious. For the sufficient
part, let $\hat{\omega}$ be any hermitian metric. Let
$$\omega_{t}=\mathring{\omega}+t\hat{\omega}$$  for $t\in(0,1)$. Then we have
\begin{equation} \label{eq:gamint1}
 \begin{split}
& \int_X e^{-v}\ppr (e^v\omega_t^k)\wedge\omega_t^{n-k-1} \\
&=\frac{\sqrt{-1}}{2}\int_X \big(\p\bar{\p}\omega_t^k \wedge\omega_t^{n-k-1} +  \p
v\wedge\bar{\p}v\wedge\omega_t^{n-1}\big) \\
& \quad +\frac{\sqrt{-1}}{2}\int_X \left[ \p\bar{\p}v\wedge
\omega_t^{n-1} + \frac{k}{n-1} \left(\p\omega_t^{n-1}\wedge
\bar{\p}v + \p v\wedge \bar{\p}\omega_t^{n-1}\right) \right]\\
&=\frac{\sqrt{-1}}{2}\int_X \big(\p\bar{\p}\omega_t^k
\wedge\omega_t^{n-k-1} +  \p
v\wedge\bar{\p}v\wedge\omega_t^{n-1}\big) \\
& \quad +\frac{\sqrt{-1}}{2}\Big(1-\frac{2k}{n-1}\Big)\int_X  v\p\bar{\p}
\omega_t^{n-1}.
  \end{split}
\end{equation}
Since $k = (n-1)/2$, the second integral on the right of
\eqref{eq:gamint1} vanishes. It follows that
\begin{align*}
& \int_X e^{-v}\ppr (e^v\omega_t^k)\wedge\omega_t^{n-k-1}
\ge \frac{\sqrt{-1}}{2}\int_X\p\bar{\p}\omega_t^k \wedge\omega_t^{k} \\
& = \frac{\sqrt{-1}}{2}\int_X\p\bar{\p}\mathring{\omega}^k \wedge \mathring{\omega}^k +t\frac{\sqrt{-1}}{2}\int_X(\p\bar{\p}\mathring{\omega}^k \wedge\Psi_t+
\p\bar{\p}\Psi_t \wedge\mathring{\omega}^k) \\
& \quad +t^2 \frac{\sqrt{-1}}{2}\int_X\p\bar{\p}\Psi_t\wedge\Psi_t > 0, \qquad \textup{for sufficiently small $t$},
\end{align*}
where $\Psi_t = \hat{\omega} \wedge (\mathring{\omega}^{k-1} + \mathring{\omega}^{k-2}\wedge \omega_t + \cdots + \mathring{\omega} \wedge \omega_t^{k-2} + \omega_t^{k-1})$. This implies that $\gamma_k(\omega_t) >0$ for the sufficiently small $t$.
\end{proof}

A similar argument works for the (classic) Gauduchon metrics, for any
dimension $n$, and for all $1 \le k \le n-2$.
\begin{lemm}
Let $X$ be an $n$-dimensional hermitian manifold, $k$ an integer such that $1 \le k \le n-2$.
Then, a hermitian metric $\omega$ on $X$ satisfies
$\gamma_k(\omega)>0$ if the Gauduchon metric $\tilde{\omega}$
in the conformal class of $\omega$ satisfies
\begin{equation} \label{eq:intc2}
     \fr \int_X \p \bar{\p} \tilde{\omega}^k \wedge \tilde{\omega}^{n-k-1} > 0.
  \end{equation}
\end{lemm}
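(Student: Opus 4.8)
The plan is to reduce to the Gauduchon metric $\tilde{\omega}$ in the conformal class of $\omega$ and to repeat the computation from the proof of Lemma~\ref{10}, now exploiting the defining property $\p\bar\p\tilde{\omega}^{n-1}=0$ to annihilate the very term that, in that lemma, forced the restriction $k=(n-1)/2$.

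First I would invoke Proposition~\ref{pr:cfinv}: since the sign of $\gamma_k$ is a conformal invariant, it suffices to prove $\gamma_k(\tilde{\omega})>0$. Let $\tilde{v}$ be the function associated to $\tilde{\omega}$ by Corollary~\ref{co:sl2}. By the expression \eqref{eq:defgam} for $\gamma_k$,
\[
   \gamma_k(\tilde{\omega}) \int_X \tilde{\omega}^n = \int_X e^{-\tilde{v}} \ppr(e^{\tilde{v}}\tilde{\omega}^k) \wedge \tilde{\omega}^{n-k-1}.
\]

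Next I would carry out, verbatim, the integration-by-parts identity \eqref{eq:gamint1} with $\tilde{\omega}$ and $\tilde{v}$ in place of $\omega_t$ and $v$; that derivation is purely algebraic together with Stokes's theorem and does not use $k=(n-1)/2$ until its final line. It yields
\[
   \gamma_k(\tilde{\omega}) \int_X \tilde{\omega}^n
   = \fr \int_X \big( \p\bar\p\tilde{\omega}^k \wedge \tilde{\omega}^{n-k-1} + \p\tilde{v}\wedge\bar\p\tilde{v}\wedge\tilde{\omega}^{n-1} \big)
   + \fr \Big(1 - \frac{2k}{n-1}\Big)\int_X \tilde{v}\, \p\bar\p\tilde{\omega}^{n-1}.
\]
Here the Gauduchon condition $\p\bar\p\tilde{\omega}^{n-1}=0$ makes the last integral vanish for every $k$, not merely for $k=(n-1)/2$. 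The remaining gradient term $\fr\int_X \p\tilde{v}\wedge\bar\p\tilde{v}\wedge\tilde{\omega}^{n-1}$ is a nonnegative multiple of $\int_X |\nabla\tilde{v}|^2\,\tilde{\omega}^n$, hence $\ge 0$. Therefore
\[
   \gamma_k(\tilde{\omega}) \int_X \tilde{\omega}^n \ge \fr \int_X \p\bar\p\tilde{\omega}^k \wedge \tilde{\omega}^{n-k-1} > 0
\]
by the hypothesis \eqref{eq:intc2}; since $\int_X \tilde{\omega}^n > 0$ we conclude $\gamma_k(\tilde{\omega})>0$, and hence $\gamma_k(\omega)>0$.

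I expect no essential difficulty in this argument. The only points needing care are the verification that the derivation of \eqref{eq:gamint1} applies unchanged to the general metric $\tilde{\omega}$ (the cancellation that previously required $k=(n-1)/2$ is now supplied instead by $\p\bar\p\tilde{\omega}^{n-1}=0$) and the pointwise nonnegativity of $\fr\,\p\tilde{v}\wedge\bar\p\tilde{v}\wedge\tilde{\omega}^{n-1}$. Both are routine, so the substance of the proof is simply the observation that the Gauduchon condition is precisely what is needed to discard the obstructive term and thereby bound $\gamma_k(\tilde{\omega})$ from below by the prescribed integral.
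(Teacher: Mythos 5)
Your proposal is correct and follows essentially the same route as the paper: reduce to the Gauduchon representative via Proposition~\ref{pr:cfinv}, apply the identity \eqref{eq:gamint1} with $\tilde{\omega}$ in place of $\omega_t$, use $\p\bar\p\tilde{\omega}^{n-1}=0$ to kill the term carrying the factor $\bigl(1-\tfrac{2k}{n-1}\bigr)$, and drop the nonnegative gradient term to conclude from \eqref{eq:defgam} and the hypothesis \eqref{eq:intc2}. The paper's proof is just a terser version of exactly this argument.
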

\begin{proof}
 By Proposition~\ref{pr:cfinv}, we can assume that $\omega = \tilde{\omega}$, without loss of generality.
  By \eqref{eq:gamint1} with $\omega$ replacing $\omega_t$, and applying $\p \bar{\p} \omega^{n-1} = 0$ yields
  \begin{align*}
   \int_X e^{-v}\ppr (e^v\omega^k)\wedge\omega^{n-1-k}
 \ge \frac{\sqrt{-1}}{2}\int_X\p\bar{\p}\omega^k  \wedge\omega^{n-1-k} > 0.
  \end{align*}
\end{proof}
\begin{coro}
  Let $(X,\omega)$ be an $n$-dimensional balanced manifold. Then, for each $1 \le k \le n-2$, we have $\gamma_k(\omega)> 0$ if
  \begin{equation*}
     \fr \int_X \p \bar{\p} \omega^k \wedge \omega^{n-1-k} > 0.
  \end{equation*}
\end{coro}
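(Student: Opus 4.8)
The plan is to deduce this corollary immediately from the preceding lemma, using the fact that a balanced metric is its own Gauduchon representative within its conformal class. The key observation is that the balanced condition $d\omega^{n-1}=0$ forces $\p \bar{\p}\omega^{n-1}=0$, so that $\omega$ is in particular a Gauduchon metric.

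First I would invoke Gauduchon's theorem~\cite{Gau}, recalled in Corollary~\ref{co:sl2} for $k=n-1$: in the conformal class of $\omega$ there is a Gauduchon metric, unique up to a positive constant multiple. Since $\p \bar{\p}\omega^{n-1}=0$ already holds, the constant function $v \equiv 0$ solves $\p \bar{\p}(e^v\omega^{n-1})=0$; by the uniqueness just mentioned, the Gauduchon representative in the conformal class of $\omega$ is $\omega$ itself, up to a positive scaling which does not alter the sign of any of the integrals involved. Hence, in the notation of the preceding lemma, we may take $\tilde{\omega}=\omega$.

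With $\tilde{\omega}=\omega$, the hypothesis $\fr \int_X \p \bar{\p}\tilde{\omega}^k \wedge \tilde{\omega}^{n-k-1}>0$ of the preceding lemma becomes precisely the hypothesis $\fr \int_X \p \bar{\p}\omega^k \wedge \omega^{n-1-k}>0$ assumed here. Applying that lemma therefore yields $\gamma_k(\omega)>0$ at once.

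There is essentially no analytic difficulty in this argument; the only point meriting attention is the identification of $\omega$ with its conformal Gauduchon representative, which rests on the uniqueness in Gauduchon's theorem rather than on any new \emph{a priori} estimate. Everything else is a verbatim specialization of the preceding lemma to the balanced case.
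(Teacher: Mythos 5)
Your proposal is correct and matches the paper's (implicit) argument: the corollary is stated immediately after the lemma precisely because a balanced metric satisfies $d\omega^{n-1}=0$, hence $\p\bar{\p}\omega^{n-1}=0$, so $\omega$ is itself the Gauduchon representative of its conformal class (up to a positive constant, which does not affect the sign of the integral), and the lemma applies with $\tilde{\omega}=\omega$. Your explicit appeal to the uniqueness in Gauduchon's theorem is exactly the detail the paper leaves unstated.
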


\section{Constructions on hermitian three--manifolds} \label{se:posgam}
We shall apply previous results to  construct a hermitian metric with $\gamma_1 >0$ on a complex three dimensional manifold. Theorem~\ref{th:posgam} will follow from Proposition~\ref{pr:0term}, together with the following theorem.

\begin{theo}\label{12}
There always exists a hermitian metric $\omega$ on a complex three dimensional manifold $X$ such that
$$(\sqrt{-1}/2) \p\bar{\p}\omega\wedge\omega >0.$$
\end{theo}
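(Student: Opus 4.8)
The plan is to reduce Theorem~\ref{12} to a pointwise positivity and to build the metric by patching an explicit local model onto a large background. Positivity of the $(3,3)$-form $\fr \p\bar\p\omega\wedge\omega$ at a point is an open condition depending only on the $2$-jet of $\omega$, so the only real issue is to realize it \emph{simultaneously} at every point. As a seed computation I would fix holomorphic coordinates $(z^1,z^2,z^3)$ and test the form $\eta=\fr\, h\, dz^3\wedge d\bar z^3$ with $h=h(z^1,\bar z^1)$. A short computation gives $\p\bar\p\eta=\fr\, h_{1\bar1}\,dz^1\wedge d\bar z^1\wedge dz^3\wedge d\bar z^3$, and hence, wedging against a fixed background metric $\omega_0=\fr\sum (g_0)_{i\bar j}dz^i\wedge d\bar z^j$, only the entry $(g_0)_{2\bar2}$ survives:
\[
  \fr\,\p\bar\p\eta\wedge\omega_0 = h_{1\bar1}\,(g_0)_{2\bar2}\,\Omega,
\]
where $\Omega$ is the local volume form. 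Since $(g_0)_{2\bar2}>0$ for any positive metric, choosing $h=C+s|z^1|^2$ makes this strictly positive, with no smallness hypothesis on the chart. The point is that the positive contribution comes from the complex Hessian of one metric coefficient sitting in a transverse slot and carries \emph{no gradient factor}; there is consequently no obstruction at critical points, which is exactly what defeats the naive conformal ansatz $\omega=e^{tf}\omega_0$, whose leading term $t^2\,\fr\,\p f\wedge\bar\p f\wedge\omega_0^2$ must degenerate at the critical points of $f$.

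Next I would globalize by a partition of unity. Fix a finite cover $\{U_\alpha\}$ of $X$ by small holomorphic charts with subordinate cutoffs $\{\chi_\alpha\}$, and on each $U_\alpha$ take a seed $\eta_\alpha$ of the model type above. Then set $\omega=\lambda\,\omega_0+\sum_\alpha\chi_\alpha\,\eta_\alpha$. Because each $\eta_\alpha$ is a nonnegative $(1,1)$-form, $\omega$ is automatically a genuine hermitian metric for every $\lambda>0$. On the interior of $\{\chi_\alpha=1\}$ the expansion simplifies dramatically: the self-term $\fr\,\p\bar\p\eta_\alpha\wedge\eta_\alpha$ vanishes (it repeats $dz^3\wedge d\bar z^3$), so that
\[
  \fr\,\p\bar\p\omega\wedge\omega = \lambda\, h_{1\bar1}(g_0)_{2\bar2}\,\Omega + \lambda^2\,\fr\,\p\bar\p\omega_0\wedge\omega_0 + \lambda\,\fr\,\p\bar\p\omega_0\wedge\eta_\alpha.
\]
Taking the Hessian strength $s=h_{1\bar1}$ large relative to $\lambda$, and the charts of small radius, the designed first term dominates the two indefinite remainders, giving strict positivity there.

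The main obstacle is the behavior on the transition regions, where several cutoffs vary and no single seed is active: this is the step I expect to demand the most care. Writing $\fr\,\p\bar\p\omega\wedge\omega=\tfrac14\sqrt{-1}\,\p\bar\p\omega^2+\fr\,\bar\p\omega\wedge\p\omega$, the torsion term $\fr\,\bar\p\omega\wedge\p\omega$ is a $(3,3)$-form that is \emph{not} sign-definite, and the cutoffs generate cross terms involving $d\chi_\alpha$ together with the first derivatives of $\eta_\alpha$; both live at the same order as the positive term and cannot be removed by scaling alone. I would control them by a compatibility arrangement: choose the cover and cutoffs so that $\mathrm{supp}(d\chi_\alpha)$ lies where $\eta_\alpha$ has been interpolated to a constant form (killing the $d\chi_\alpha\wedge\nabla\eta_\alpha$ terms), and refine the cover into a good cover so that every transition region of one chart is interior to another chart whose seed remains active and positive. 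The delicate point is to verify, uniformly over $X$ and in particular on the multiple overlaps, that the assembled positive Hessian contribution stays strictly dominant over the indefinite torsion and cutoff terms after the final choice balancing $s$ and $\lambda$; this pointwise dominance estimate is the heart of the argument.
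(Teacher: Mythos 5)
Your approach is genuinely different from the paper's, and it has a gap that I believe is fatal as outlined, located precisely at the step you defer (the "heart of the argument"). The engine of your positivity is the term $\fr\,\p\bar\p(\chi_\alpha\eta_\alpha)\wedge\lambda\omega_0$, which is \emph{linear} in the seed. But by Stokes' theorem,
\[
\int_X \fr\,\p\bar\p(\chi_\alpha\eta_\alpha)\wedge\omega_0 \;=\; \int_X \fr\,\chi_\alpha\eta_\alpha\wedge\p\bar\p\omega_0 ,
\]
and the right-hand side is $O\bigl((C+sr^2)r^6\bigr)$, negligible compared with the mass $\sim sr^6$ of your designed positive contribution once the chart radius $r$ is small. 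Hence the negative part of this same linear term must have total mass nearly equal to its positive part, and it sits exactly in the transition region where $d\chi_\alpha\neq 0$. So the transition terms are not a technicality to be beaten down by balancing $s$ against $\lambda$: an amount of negativity comparable to \emph{all} of your designed positivity necessarily concentrates there, for every choice of cover and cutoffs. Your proposed remedy --- interpolating $\eta_\alpha$ to a constant form on $\mathrm{supp}(d\chi_\alpha)$ --- cannot be implemented while keeping the Hessian contribution nonnegative: where $h_{1\bar 1}\ge 0$ the function $h$ is subharmonic in $z^1$, and a function subharmonic on a disc and constant near its boundary is constant by the maximum principle, so flattening $h$ forces $h_{1\bar 1}<0$ somewhere, reintroducing negativity of the same order. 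Finally, since your seeds are single-component, the quadratic self-terms $\p\bar\p\eta_\alpha\wedge\eta_\alpha$ vanish identically; after summing over $\alpha$, no term in $\fr\,\p\bar\p\omega\wedge\omega$ has a robustly positive integral over $X$, so the construction has no source of net positivity at all.

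The paper avoids pointwise patching entirely, and this is the idea your proposal is missing. It needs only \emph{one} seed: a compactly supported semi-positive form $\mathring\omega=\fr(\phi\,dz_1\wedge d\bar z_1+\psi\,dz_2\wedge d\bar z_2)$ in a single chart --- two components, precisely so that the \emph{quadratic} self-term $\p\bar\p\mathring\omega\wedge\mathring\omega=(\phi\,\psi_{z_3\bar z_3}+\psi\,\phi_{z_3\bar z_3})\,dV$ does not vanish --- and only the \emph{integral} inequality $\fr\int_X\p\bar\p\mathring\omega\wedge\mathring\omega>0$, which reduces to the elementary one-variable fact that nonnegative bump functions $f,g$ exist with $\int fg\,dt>0$ and $-\int f'g'\,dt>0$. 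Being quadratic in the seed, this integral suffers no Stokes cancellation. The conversion from this single integral inequality to a metric with \emph{pointwise} positivity is then done by the analytic machinery of the paper (Lemma~\ref{10} together with Proposition~\ref{pr:0term}, resting on the solvability of the semilinear equation in Corollary~\ref{co:sl2}): one perturbs to a genuine metric $\omega_t=\mathring\omega+t\hat\omega$ with $\gamma_1(\omega_t)>0$ for small $t$, and the conformal factor $e^{v}$ solving \eqref{eq:sl2} yields the metric whose form satisfies the strict pointwise inequality. If you wanted a proof along your lines, you would in effect have to reprove this PDE step by hand; I do not see how to close your patching argument without it.
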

\begin{proof}
By Lemma~\ref{10} and Proposition~\ref{pr:0term}, it suffices to construct a semi-metric $\mathring{\omega}$ such that
\[
        \fr \int_X  \p \bar{\p} \mathring{\omega} \wedge \mathring{\omega} >0.
\]

Fix a point $q\in X$ and a  coordinate patch $U \ni q$. Let $(z_1,z_2,z_3)$
be coordinates on $U$ centered at $q$. Here $z_j=x_j+\sqrt{-1}y_j$
for $1\leq j\leq 3$. We can assume $N= B \times B \times R \subset
U$, where $B$ is the unit ball in $\mathbb{C}$, and
\[
   R = \{z_3\in \mathbb{C}\mid |x_3| \le 1, |y_3| \le 1 \}.
\]

Take a nonnegative cut-off function   $\eta \in C^{\infty}_0(B)$ and
 two nonnegative functions $f, g \in C^{\infty}_0([-1,1])$ to be
determined later. On
  $N$, define   $$\phi =\eta(z_1)\eta(z_2)f(x_3)f(y_3),\ \
\psi =\eta(z_1)\eta(z_2)g(x_3)g(y_3),$$ and then define
\begin{eqnarray}
\mathring{\omega}=\frac{\sqrt{-1}}{2}\big[\phi(z) dz_1\wedge
d\bar{z}_1+\psi(z) dz_2\wedge d\bar{z}_2\big].
\end{eqnarray}
 Obviously,
$\mathring{\omega}$ is semi-positive and with compact support in
$N$. So it can be viewed  as a semi-metric on $X$. Clearly,
\begin{eqnarray}
\ppfr\mathring{\omega}\wedge\mathring{\omega} = \Big(\phi
\frac{\p^2\psi}{\p z_3\p\bar{z}_3}+\psi \frac{\p^2\phi}{\p
z_3\p\bar{z}_3}\Big) dV,
\end{eqnarray}
where
\begin{equation} \label{eq:defdV}
   dV = \Big(\frac{\sqrt{-1}}{2}\Big)^3dz_1\wedge d\bar{z}_1\wedge
dz_2\wedge d\bar{z}_2\wedge dz_3\wedge d\bar{z}_3.
\end{equation}
Since $$\frac{\p}{\p z_3}=\frac{1}{2}\Big(\frac{\p}{\p
x_3}-\sqrt{-1}\frac{\p}{\p y_3}\Big),\ \ \frac{\p}{\p
\bar{z}_3}=\frac{1}{2}\Big(\frac{\p}{\p x_3}+\sqrt{-1}\frac{\p}{\p
y_3}\Big),
$$
we have
\begin{align*}
& \phi\frac{\p^2\psi}{\p z_3\p\bar{z}_3}+\psi \frac{\p^2\phi}{\p
z_3\p\bar{z}_3} \\
& = \frac{\phi}{4} \left(\frac{\p^2 \psi}{\p x_3 \p x_3} + \frac{\p^2 \psi}{\p y_3 \p y_3} \right)
+ \frac{\psi}{4} \left(\frac{\p^2 \phi}{\p x_3 \p x_3} + \frac{\p^2 \phi}{\p y_3 \p y_3}\right) \\
&=\frac{1}{4}\eta^2(z_1)\eta^2(z_2)f(y_3)g(y_3)\big[f(x_3) g''(x_3)+g(x_3)f''(x_3)\big]\\
& \quad
+\frac{1}{4}\eta^2(z_1)\eta^2(z_2)f(x_3)g(x_3)\big[f(y_3)g''(y_3)+g(y_3)f''(y_3)\big].
\end{align*}
We choose  $\eta$ so that
\[
   \int_B \eta^2(z) \fr d z \wedge d \bar{z} = 1.
\]
Then it follows that
\begin{align*}
\fr \int_X \p \bar{\p} \mathring{\omega} \wedge \mathring{\omega}
&= \frac{1}{2}\int_{-1}^1 f(t) g(t) dt \int^1_{-1} \big[f(t)g''(t) + f''(t) g(t)\big] dt \\
&= \int_{-1}^1 f(t) g(t) dt \int^1_{-1} \big[ - f'(t) g'(t) \big]
dt.
\end{align*}
The result follows immediately from the proposition below.
\end{proof}

\begin{prop}
 There exist nonnegative functions $f, g \in C^{\infty}_0([-1,1])$ such that
 \[
    - \int_{-1}^1 f'(t) g'(t) d t > 0.
 \]
\end{prop}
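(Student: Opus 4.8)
The plan is to produce $f$ and $g$ as two translates of a single fixed bump, and then to recognize the quantity $-\int_{-1}^1 f'g'\,dt$ as (minus) the autocorrelation of the derivative of that bump, whose sign I can control by an elementary global argument rather than any explicit computation.

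First I would fix a nonnegative $\varphi \in C^\infty_0([-1/2,1/2])$ with a single maximum at the origin, and for a shift parameter $s \in (0,1/2)$ set $f(t) = \varphi(t+s)$ and $g(t) = \varphi(t-s)$. Both are nonnegative, and since $s < 1/2$ both are supported strictly inside $[-1,1]$, so indeed $f,g \in C^\infty_0([-1,1])$. The construction is designed so that between the two peaks $f$ is decreasing while $g$ is increasing, forcing $f'g' < 0$ there; the whole point is to show this negative contribution dominates globally.

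Next I would rewrite the target integral as an autocorrelation. Setting $R(\tau) = \int_{\mathbb{R}} \varphi'(v)\varphi'(v-\tau)\,dv$, the substitution $v = t+s$ gives
\[
   -\int_{-1}^1 f'(t) g'(t)\, dt = -\int_{\mathbb{R}} \varphi'(t+s)\varphi'(t-s)\,dt = -R(2s),
\]
so it suffices to find an interior point $\tau_0 \in (0,1)$ with $R(\tau_0) < 0$ and then put $s = \tau_0/2$. Such a $\tau_0$ must exist: the function $R$ is continuous, even, and supported in $[-1,1]$ (because $\varphi'$ is supported in $[-1/2,1/2]$), and one computes, using Fubini together with the compact support of $\varphi$,
\[
   R(0) = \int_{\mathbb{R}} |\varphi'|^2\, dv > 0, \qquad \int_{\mathbb{R}} R(\tau)\, d\tau = \Big( \int_{\mathbb{R}} \varphi'(v)\, dv \Big)^2 = 0.
\]
By evenness $\int_0^1 R(\tau)\, d\tau = 0$, while $R(0) > 0$ and $R(1) = 0$; hence if $R$ were nonnegative throughout $[0,1]$ its integral over $[0,1]$ would be strictly positive, a contradiction. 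Therefore $R(\tau_0) < 0$ for some $\tau_0 \in (0,1)$, and choosing $s = \tau_0/2 \in (0,1/2)$ yields $-\int_{-1}^1 f'g'\, dt = -R(\tau_0) > 0$, as required.

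The only delicate point is guaranteeing that the negative lag $\tau_0$ is strictly interior to $(0,1)$, so that the translated bumps stay supported inside $[-1,1]$; this is exactly what the three facts $\operatorname{supp} R \subset [-1,1]$, $R(0)>0$, and $\int R = 0$ deliver. I would also record that, with this choice, the supports of $f$ and $g$ genuinely overlap, so $\int_{-1}^1 fg\, dt > 0$; this makes the integral in Theorem~\ref{12} strictly positive, although it is not needed for the Proposition as stated.
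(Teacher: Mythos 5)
Your proof is correct, and it takes a genuinely different route from the paper's. The paper is fully explicit: it sets $f=\chi_{-1/3,1/3}$ and $g=\chi_{0,2/3}$, where $\chi_{a,b}$ is the standard bump on $(a,b)$, and uses the monotonicity of $\chi_{a,b}$ (derivative positive on the left half of its support, negative on the right half) to see that $f'g'$ can be nonzero only on $(0,1/3)$, where $f'\le 0$ and $g'\ge 0$; thus the integrand $-f'g'$ is pointwise nonnegative and strictly positive on an interval, so the integral is positive with no cancellation to worry about. You instead take two translates $f=\varphi(\cdot+s)$, $g=\varphi(\cdot-s)$ of an \emph{arbitrary} nonnegative bump and do no pointwise sign analysis at all: writing $-\int f'g'\,dt=-R(2s)$ with $R$ the autocorrelation of $\varphi'$, you combine $R(0)=\int|\varphi'|^2>0$, $\int_{\mathbb{R}}R=\bigl(\int\varphi'\bigr)^2=0$ (Fubini plus compact support), evenness, continuity, and $\operatorname{supp}R\subset[-1,1]$ to force $R(\tau_0)<0$ at some interior lag $\tau_0\in(0,1)$, then take $s=\tau_0/2$; all of these steps are sound, including the bookkeeping that keeps $f,g$ supported inside $(-1,1)$. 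The paper's construction buys explicitness and the stronger pointwise inequality $f'g'\le 0$; your argument buys robustness and a conceptual explanation of the mechanism---$\varphi'$ has mean zero, so its autocorrelation must change sign, hence every bump works after a suitable translation---at the price of a nonconstructive shift. (Incidentally, the paper's $f$ and $g$ are themselves translates of one another with shift $1/3$, so your setup subsumes their example. Your closing remark that $\int fg>0$ is also correct, but the cleanest justification is not merely that the supports overlap: since $\varphi\ge 0$, any zero of $\varphi$ is a minimum, so $\varphi'=0$ there; hence $fg>0$ on the positive-measure set where $f'g'\ne 0$.)
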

\begin{proof}
 For any two real numbers $a< b$, we denote
 \[
   \chi_{a,b} (t) =
   \begin{cases}
    \exp\left(\dfrac{1}{t - b} - \dfrac{1}{t - a}\right), & \textup{if $a < t < b$}, \\
    0, & \textup{otherwise}.
   \end{cases}
 \]
 Clearly, we have that $\chi_{a,b} \in C_0^{\infty}(\mathbb{R})$, that $\chi_{a,b}'(t) >0$ for $a < t < (a+b)/2$, that $\chi_{a,b}'(t) < 0$ for $(a+b)/2 < t < b$, and that $\chi'_{a,b}(t) = 0$ when $t = (a+b)/2$. Letting
 \[
    f(t) = \chi_{-1/3,1/3}(t), \qquad \textup{and \quad $g(t) = \chi_{0,2/3}(t)$}
 \]
 yields that $- f'(t) g'(t) > 0$ for $0 < t < 1/3$ and otherwise $f'(t)g'(t) = 0$. This in particular implies the result.
\end{proof}

Let us now consider some examples. We can directly construct a
hermitian metric $\omega$ with $\gamma_1(\omega) >0$ on $T^3$, the
$3$-dimensional complex torus.

\begin{prop} \label{pr:torus1}
On the complex $T^3$, there is a metric $\omega$ satisfying
$$\sqrt{-1}/2\p\bar{\p}\omega\wedge\omega >0.$$
\end{prop}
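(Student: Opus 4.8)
The plan is to write down an explicit diagonal metric on $T^3 = \mathbb{C}^3/\Lambda$ and reduce the desired inequality to an elementary one-variable positivity statement. Taking $\Lambda$ to be the standard lattice (so that, say, $\sin(2\pi x_3)$ and $\cos(2\pi x_3)$ descend to $T^3$, where $x_3 = \operatorname{Re} z_3$) and using that $dz_1, dz_2, dz_3$ are global $(1,0)$-forms, I would set
\[
   \omega = \fr\bigl(\phi\, dz_1\wedge d\bar z_1 + \psi\, dz_2\wedge d\bar z_2 + dz_3\wedge d\bar z_3\bigr),
\]
where $\phi, \psi$ are strictly positive smooth functions depending only on $x_3$, chosen periodic so that they are well defined on $T^3$. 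Since $\phi, \psi > 0$, the form $\omega$ is block-diagonal with positive entries, hence a genuine hermitian metric.

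Next I would compute $\ppfr\,\omega\wedge\omega$ by exactly the bidegree bookkeeping already carried out in the proof of Theorem~\ref{12}. The constant block $dz_3\wedge d\bar z_3$ is annihilated by $\p\bar{\p}$, and because $\phi, \psi$ depend only on $z_3, \bar z_3$ the only surviving second derivatives are $\phi_{3\bar 3}$ and $\psi_{3\bar 3}$; every other term either repeats a differential or requires a vanishing mixed derivative. This gives
\[
   \ppfr\,\omega\wedge\omega = \bigl(\phi\,\psi_{3\bar 3} + \psi\,\phi_{3\bar 3}\bigr)\,dV = \tfrac14\bigl(\phi\,\psi'' + \psi\,\phi''\bigr)\,dV,
\]
where $dV$ is the Euclidean volume form and $'$ denotes $d/dx_3$ (using $\phi_{3\bar 3} = \tfrac14\phi''$ for a function of $x_3$ alone). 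Thus the problem reduces to choosing positive periodic $\phi, \psi$ of one real variable with $\phi\psi'' + \psi\phi'' > 0$ everywhere, equivalently $\phi''/\phi + \psi''/\psi > 0$.

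The main obstacle is that this positivity must hold \emph{pointwise}, not merely in the integral: writing $g = \log\phi$ and $h = \log\psi$, one has $\phi''/\phi + \psi''/\psi = g'' + (g')^2 + h'' + (h')^2$, whose integral over the circle equals $\int\bigl((g')^2 + (h')^2\bigr) \ge 0$, so strictness is delicate and the obvious guesses fail (for instance $\psi = 1/\phi$ yields $2(g')^2$, which vanishes at the critical points of $g$). To overcome this I would pick $g$ and $h$ whose derivatives never vanish simultaneously and whose amplitude dominates the second-derivative terms, for instance $g = A\sin(2\pi x_3)$ and $h = A\cos(2\pi x_3)$. Then $(g')^2 + (h')^2 = 4\pi^2 A^2$ is a positive constant, while $g'' + h'' = -4\pi^2 A\bigl(\sin 2\pi x_3 + \cos 2\pi x_3\bigr)$ is bounded in absolute value by $4\pi^2 A\sqrt 2$, so that
\[
   \frac{\phi''}{\phi} + \frac{\psi''}{\psi} = 4\pi^2 A\bigl(A - \sin 2\pi x_3 - \cos 2\pi x_3\bigr) > 0
\]
as soon as $A > \sqrt 2$, e.g. $A = 2$. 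This produces an explicit metric $\omega$ with $\ppfr\,\omega\wedge\omega > 0$ pointwise, which is the assertion.
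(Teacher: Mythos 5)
Your proof is correct, and up to the reduction it coincides with the paper's: the same diagonal ansatz with two positive functions of $x_3$ times the flat factor $dz_3\wedge d\bar z_3$, and the same bidegree computation reducing the positivity of $\ppfr\,\omega\wedge\omega$ to the one-variable inequality $\phi''/\phi + \psi''/\psi > 0$ for smooth positive periodic functions. Where you genuinely diverge is in producing such a pair. The paper takes $\xi(t) = 1 + \kappa\sin t$, invokes the monotone divergence of $\int_0^{2\pi}(1+\kappa\sin t)^{-1}\,dt$ as $\kappa \to 1^-$ (citing Proposition 8 of \cite{FuWangWu}) to arrange $\int_0^{2\pi} \xi''/\xi\, dt = 2\pi C$ for some prescribed constant $C>0$, then solves the periodic ODE $\zeta'' + \xi''/\xi = C$ and sets $\eta = e^{\zeta}$, so that $\eta''/\eta + \xi''/\xi = (\zeta')^2 + C \ge C > 0$. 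You instead use the logarithmic identity $\phi''/\phi = g'' + (g')^2$ with $g = \log\phi$ and take the phase-shifted pair $g = A\sin(2\pi x_3)$, $h = A\cos(2\pi x_3)$: the first-derivative terms sum to the constant $4\pi^2 A^2$ while the second-derivative terms are bounded by $4\pi^2 A\sqrt{2}$, so any $A > \sqrt{2}$ works. Your construction is fully explicit and self-contained---it needs no limiting argument, no periodic ODE, and no external citation---and your observation that naive choices such as $\psi = 1/\phi$ only give $2(g')^2$, which vanishes at critical points, correctly isolates the pointwise (as opposed to integral) difficulty that both proofs must overcome. The paper's method buys slightly more flexibility (one may prescribe one factor $\xi$ and the defect constant $C$ in advance), but for the statement at hand your argument is the shorter and more elementary of the two.
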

\begin{proof}
Let $(z_1, z_2, z_3)$ be the coordinates of $T^3$
induced from $\mathbb{C}^3$. Let
$$\omega=\frac{\sqrt{-1}}{2}\left[\xi(x_3)dz_1\wedge d\bar{z}_1+\eta(x_3)dz_2\wedge d\bar{z}_2
+dz_3\wedge d\bar{z}_3\right],$$
where $\xi$ and $\eta$ are two positive smooth functions on $T^3$ only depending on $x_3$,
which will be determined later.
Then
\begin{equation*}
\ppr\omega\wedge\omega  =\left(\eta\frac{\p^2 \xi}{\p z_3\p
\bar{z}_3}+\xi\frac{\p^2 \eta}{\p z_3\p
\bar{z}_3}\right)dV>0
\end{equation*}
if and only if
$$\eta\frac{\p^2 \xi}{\p z_3\p
\bar{z}_3}+\xi\frac{\p^2 \eta}{\p z_3\p \bar{z}_3} =\frac 1 4
\eta\frac{\p^2 \xi}{\p x_3^2}+\frac 1 4\xi\frac{\p^2 \eta}{\p
x_3^2}>0.$$ Here $dV$ is defined by \eqref{eq:defdV}.
So we need to look for  two smooth, positive,
$2\pi$-periodic functions $\eta$ and $\xi$ such that
$$\frac{\eta''(t)}{\eta(t)}+\frac{\xi''(t)}{\xi(t)}>0.$$

We define
\begin{equation} \label{eq:defxi}
  \xi(t)=1+ \kappa \sin t, \qquad \textup{for some $0 < \kappa <1$}.
\end{equation}
We observe that
\begin{eqnarray}
\int^{2\pi}_0\frac{\xi''}{\xi} dt =-\int^{2\pi}_0\frac{\kappa \sin t}{1+\kappa  \sin
t} dt =-2\pi+\int^{2\pi}_0\frac{dt}{1+ \kappa \sin t} \nonumber.
\end{eqnarray}
By Proposition 8 in \cite{FuWangWu}, the
value of above integral tends to $+\infty$ monotonically, as $\kappa \to 1^-$.  Hence, for a
constant $C > 0$, there is a unique real number $\kappa$, such that the function $\xi$ given
by \eqref{eq:defxi} satisfies
$$\int^{2\pi}_0\frac{\xi''}{\xi} dt =\int_0^{2\pi}C dt.$$ It implies that equation
$$\zeta''+\frac{\xi''}{\xi}= C$$ has a  smooth $2\pi$-periodic solution $\zeta$ on $\mathbb{R}$. Let $\eta=e^\zeta$. Thus,
\begin{eqnarray}
\frac{\eta''(t)}{\eta(t)}+\frac{\xi''(t)}{\xi(t)}&=&(\zeta')^2+\zeta''+\frac{\xi''}{\xi}\ \ \geq \ \
C\ \  > \ \ 0.\nonumber
\end{eqnarray}
\end{proof}
As another example, we show that the natural balanced metric on the
Iwasawa manifold has positive $\gamma_1$. Recall (for example,
\cite[p. 444]{GrHa} and \cite[p. 115]{MK}) that the Iwasawa manifold
is defined to be the quotient space $G/\Gamma$, where
\[
   G = \left\{ \left[\begin{matrix}
                       1 & z_1 & z_3 \\
                       0 & 1 & z_2 \\
                       0 & 0 & 1
                \end{matrix} \right]; z_1, z_2, z_3 \in \mathbb{C}\right\},
\]
$\Gamma$ is the discrete subgroup of $G$ consisting of matrices where $z_1$, $z_2$, $z_3$ are Gaussian integers, i.e., $z_i \in \{a + b \sqrt{-1} \mid a, b \in \mathbb{Z}\}$ for $1 \le i \le 3$,
and $\Gamma$ acts on $G$ by left multiplications. Clearly, the global holomorphic 1-forms
\[
   \varphi_1 = dz_1, \qquad \varphi_2 = d z_2, \qquad \varphi_3 = dz_3 - z_1 dz_2 
\]
on $G$ are invariant under the action of $\Gamma$, hence descend down to $G/\Gamma$. Observe that $G/\Gamma$ does not admit any K\"ahler metric, because $d \varphi_3 = \varphi_2 \wedge \varphi_1 \ne 0$. Let
\[
   \omega = (\sqrt{-1}/2) (\varphi_1 \wedge \bar{\varphi}_1 + \varphi_2 \wedge \bar{\varphi}_2 + \varphi_3 \wedge \bar{\varphi}_3).
\]
Then, $\omega$ is a balanced hermitian metric on $G/\Gamma$, for $d \omega^2 = 0$. Furthermore, we have
\[
 (\sqrt{-1}/2) \p \bar{\p} \omega \wedge \omega 
 = (\sqrt{-1}/2)^3 \varphi_1 \wedge \bar{\varphi}_1 \wedge \varphi_2 \wedge \bar{\varphi}_2 \wedge \varphi_3 \wedge \bar{\varphi}_3 > 0
\]
on $G/\Gamma$; hence, by Proposition~\ref{pr:0term}, we conclude that $\gamma_1(\omega) >0$.

\section{The first Gauduchon metric on Calabi's manifolds}
In this section, we shall establish the existence of the $1$-st
Gauduchon metric on the non-K\"ahler manifold introduced by
Calabi~\cite{Ca}. In view of Theorem~\ref{th:posgam} and
Corollary~\ref{cor0}, we need to find a hermitian metric with
negative $\gamma_1$ value.

We first recall Calabi's construction of non-K\"ahler complex three
dimensional manifolds.  Let $\mathbb O\cong\mathbb R^8$ denotes the
Cayley numbers. We fix a
basis $\{I_1,\cdots, I_7\}$ such that\\
(1) $I_i\cdot I_j=\delta_{ij}$ with respect to the inner product.\\
(2) The table of the multiplication of the cross product $I_j\times
I_k$ is the following
\begin{equation}\lab{301}
\begin{array}{c|ccccccc}
\times & I_1& I_2&I_3&I_4&I_5&I_6&I_7\\
\hline I_1&0&I_3&-I_2&I_5&-I_4&I_7&-I_6\\
I_2&-I_3&0&I_1&I_6&-I_7&-I_4&I_5\\
I_3&I_2&-I_1&0&-I_7&-I_6&I_5&I_4\\
I_4&-I_5&-I_6&I_7&0&I_1&I_2&-I_3\\
I_5&I_4&I_7&I_6&-I_1&0&-I_3&-I_2\\
I_6&-I_7&I_4&-I_5&-I_2&I_3&0&I_1\\
I_7&I_6&-I_5&-I_4&I_3&I_2&-I_1&0
\end{array}
\end{equation}
Via this basis, we have the isomorphism $\mathbb R^7\cong
\text{Im}(\mathbb O)$.

Calabi considered a smooth oriented hypersurface $X^6\hookrightarrow
\mathbb R^7$. Fix a unit normal vector field $N$ of $X$. There is a
natural almost complex structure $J:TX\to TX$ induced by Cayley
multiplication as follows. For any $x\in X$ and any $V\in T_xX$,
define $J:T_xX\to T_xX$ as
\begin{equation*}
J(V)=N\times V.
\end{equation*}
Calabi  proved that $J$ is integrable if and only if $J$
anticommutes with the second fundamental form of $X$.

Calabi constructed  compact complex manifolds as follows. Let
$\Sigma$ be a compact Riemann surface which admits 3 holomorphic
differentials $\phi_1$, $\phi_2$, $\phi_3$ with the following
properties:
\begin{enumerate}
\item linear independent;
\item  $\phi_1^2+\phi_2^2+\phi_3^2=0$;
\item $\phi_1\wedge\bar\phi_1+\phi_2\wedge\bar\phi_2+\phi_3\wedge\bar
\phi_3>0$.
\end{enumerate}
Lifting $\phi_1$, $\phi_2$, $\phi_3$ to the universal covering
$\tilde{\Sigma}\to \Sigma$ and setting
\begin{equation*}
x^j(p)=\text{Re}\int_{p'}^p\phi_j,\ \ j=1,2,3
\end{equation*}
for a fixed point $p'\in\Sigma $, we obtain a conformal minimal
immersion
\begin{equation*}
\psi=(x^1,x^2,x^3):\tilde{\Sigma}\to \mathbb R^3.
\end{equation*}
This mapping is regular, since the differentials $\phi_j$ satisfy
(3); by the weierstrass representation, property (2) is equivalent
to the statement that $\psi$ is minimal; finally, because of
property (1), it follows that $\tilde{\Sigma}$ is not mapped into a
plane.

Calabi then  considered the hypersurface of the type
\begin{equation*}
(\psi,id):\tilde{\Sigma}\times \mathbb R^4\to\mathbb R^3\times
\mathbb R^4=\text{Im}(\mathbb O),
\end{equation*}
where $\mathbb R^3=\text{span}_{\mathbb R}\{I_1,I_2,I_3\}$ and
$\mathbb R^4=\text{span}_{\mathbb R}\{I_4,I_5,I_6,I_7\}$. Since
$\psi:\tilde{\Sigma}\to\mathbb R^3$ is minimal,
$\tilde{\Sigma}\times \mathbb R^4$ is the complex manifold. If
$g:\tilde{\Sigma}\to\tilde{\Sigma}$ denotes a covering
transformation, then $\psi(gp)=\psi(p)+t_g$ for some vector
$t_g\in\mathbb R^3$. It follows that the complex structure on
$\tilde{\Sigma}\times\mathbb R^4$ is invariant by the covering group
of $\Sigma$ and so descends to $\Sigma\times \mathbb R^4$. On the
other hand, for $\mathbb R^4$, we can further divide by a lattice
$\Lambda$ of translation of $\mathbb R^4$, and thereby produce a
compact complex manifold $X_\Lambda=\Sigma\times T^4$. We can view
$X_\Lambda$ as a family of complex tori, parameterized by the
Riemann surface.

Calabi showed that such complex manifolds $X_\Lambda$ are
non-K\"ahler. However, there exists a balanced metric on these
manifolds \cite{Gr,Mic}. Let us consider the {\sl nature} metric.

Define a 2-form on $X_\Lambda$ as
\begin{equation*}
\omega_0(V,W)=N\cdot(V\times W)
\end{equation*}
for any $V,W\in T_xX_\Lambda$ at any  $x\in X_\Lambda$. Then clearly
we have
\begin{equation*}
\omega_0(V,W)=-\omega_0(W,V);
\end{equation*}
and  using the formula
\begin{equation*}
N\cdot(V\times W)=(N\times V)\cdot W,
\end{equation*}
we also have
\begin{equation*}
\begin{aligned}
&\omega_0(JV,JW)=\omega_0(V,W);\\
&\omega_0(V,JV)=(N\times V)\cdot (N\times V)>0,\ \ \text{if} \ \
V\not=0.
\end{aligned}
\end{equation*}
So $\omega_0$ is the positive $(1,1)$-form on $X_\Lambda$ and
therefore defines a hermitian metric.

Next we check that  $\omega_0$ is a balanced metric.
The unit normal vector field of $X$ in $\mathbb{R}^7$
can be written as
\begin{equation}\lab{nc}
N=\sum_{j=1}^3 a_j I_j, \qquad \sum_{j=1}^3 a_j^2=1,
\end{equation}
where $a_j$ for $j=1,2,3$ are  functions on $\Sigma$.
Let $(x_4,x_5,x_6,x_7)$ be the coordinates of
$\mathbb R^4$. Then we can write the hermitian metric $\omega_0$ as
\begin{equation*}
\omega_0=\omega_{\Sigma}+\varphi_0,
\end{equation*}
where $\omega_{\Sigma}$ is a K\"ahler metric on $\Sigma$ and
\begin{equation*}
\begin{aligned}
\varphi_0=&a_1 dx_4\wedge dx_5+a_2dx_4\wedge dx_6-a_3dx_4\wedge
dx_7\\
-&a_3 dx_5\wedge dx_6 -a_2dx_5\wedge dx_7+a_1dx_6\wedge dx_7.
\end{aligned}
\end{equation*}
By direct check, we have
\begin{equation*}
\varphi_0^2=2dx_4\wedge dx_5\wedge dx_6\wedge dx_7.
\end{equation*}
Therefore,
\begin{equation*}
d(\omega^2_0)=d(2\omega_{\Sigma}\wedge
\varphi_0+\varphi_0^2)=2d\omega_{\Sigma}\wedge
\varphi_0+2\omega_{\Sigma}\wedge d\varphi_0=0,
\end{equation*}
since $\omega_{\Sigma}$ is a K\"ahler metric and all functions $a_j$
are defined on $\Sigma$.

\vspace{2mm} At last we prove that there exists a 1-Gauduchon metric
on $X_{\Lambda}$. By direct computation, we have
\begin{equation*}
\partial\bar\partial\omega_0\wedge\omega_0=\partial\bar\partial\varphi_0\wedge\varphi_0=2\sum_{j=1}^3a_j\partial\bar\partial
a_j\wedge dx_4\wedge dx_5\wedge dx_6\wedge dx_7.
\end{equation*}
Condition (\ref{nc}) implies
\begin{equation*}
\sum_{j=1}^3a_j\partial\bar\partial  a_j=-\sum_{j=1}^3\partial
a_j\wedge\bar\partial a_j,
\end{equation*}
Combining the above two equalities yields
\begin{equation*}
  \begin{split}
\sqrt{-1}\partial\bar\partial
\omega_0\wedge\omega_0
   & =-2\sqrt{-1}\sum_{j=1}^3
\partial a_j\wedge\bar\partial a_j\wedge dx_4\wedge dx_5\wedge
dx_6\wedge dx_7 \\
   &=-4\sum_{j=1}^3|\partial a_j|^2\omega^3_0,
   \end{split}
\end{equation*}
and therefore,
\begin{equation*}
\sqrt{-1}\int_{X_{\Lambda}}\partial\bar\partial(e^v\omega_0)\wedge\omega_0
=\sqrt{-1}\int_{X_{\Lambda}}e^v\omega_0\wedge\partial\bar\partial\omega_0<0.
\end{equation*}
Hence, we have $\gamma_1(\omega_0) <0$, by Corollary~\ref{co:sl2};
so $-1\in\Xi_1(X_{\Lambda})$.

\begin{prop}
$\Xi_1(X_{\Lambda})=\{-1,0,1\}$.
\end{prop}
\begin{proof}
We have proven $-1\in\Xi_1(X_\Lambda)$ and According to Theorem
\ref{th:posgam} we also have $1\in\Xi_1(X_\Lambda)$. Then by Corollary
\ref{cor0}, $0\in \Xi_1(X_{\Lambda})$.
\end{proof}

\begin{coro}
There exists a 1-Gauduchon metric on $X_{\Lambda}$.
\end{coro}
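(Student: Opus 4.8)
The plan is to deduce this directly from the immediately preceding Proposition, which asserts $\Xi_1(X_\Lambda) = \{-1,0,1\}$. The only content I actually need is that $0 \in \Xi_1(X_\Lambda)$, that is, the existence of a hermitian metric $\omega$ on $X_\Lambda$ with $\gamma_1(\omega) = 0$; once this is in hand, the corollary is a formal consequence of the characterization in Proposition~\ref{pr:kGau}.

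Concretely, I would first recall the two ingredients that force the sign $0$ into the range. On the one hand, the natural balanced metric $\omega_0$ constructed above was shown to satisfy $\sqrt{-1}\,\partial\bar\partial \omega_0 \wedge \omega_0 = -4\sum_{j=1}^3 |\partial a_j|^2 \omega_0^3$, whence $\gamma_1(\omega_0) < 0$ by Corollary~\ref{co:sl2}, so $-1 \in \Xi_1(X_\Lambda)$. On the other hand, Theorem~\ref{th:posgam} provides a metric with $\gamma_1 > 0$, so $1 \in \Xi_1(X_\Lambda)$. Applying Corollary~\ref{cor0} to such a pair $\omega_1, \omega_2$ — forming the convex combination $\omega_t = t\omega_1 + (1-t)\omega_2$ and invoking the Mean Value Theorem on the map $t \mapsto \gamma_1(\omega_t)$, which is continuous since $\gamma_1$ depends smoothly on the metric by Proposition~\ref{pr:moduli} — yields a metric $\omega$ with $\gamma_1(\omega) = 0$.

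Finally, I would invoke Proposition~\ref{pr:kGau} with $k = 1$: because $X_\Lambda$ carries a hermitian metric $\omega$ with $\gamma_1(\omega) = 0$, the conformally rescaled metric $e^{v}\omega$, where $v$ is the solution of \eqref{eq:sl2} associated with $\omega$, is a $1$-st Gauduchon metric on $X_\Lambda$. I expect no genuine obstacle in this last step: all the substantive analytic and geometric work — the positivity construction of Theorem~\ref{th:posgam}, the computation of the negative $\gamma_1$ for the balanced metric $\omega_0$, the smooth dependence and intermediate-value argument, and the equivalence $\gamma_k = 0 \Leftrightarrow$ existence of a $k$-th Gauduchon metric in Proposition~\ref{pr:kGau} — has already been established, so the corollary merely assembles these facts.
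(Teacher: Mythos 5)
Your proposal is correct and follows essentially the same route as the paper: the paper obtains the corollary directly from the preceding proposition ($0\in\Xi_1(X_\Lambda)$, assembled from $\gamma_1(\omega_0)<0$ for the balanced metric, Theorem~\ref{th:posgam}, and the intermediate-value argument of Corollary~\ref{cor0}), with Proposition~\ref{pr:kGau} converting $\gamma_1(\omega)=0$ into an actual $1$-st Gauduchon metric $e^{v}\omega$. Your unwinding of these steps matches the paper's (largely implicit) argument exactly.
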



\section{The first Gauduchon metric on $S^5\times S^1$}
Let $S^5\to \mathbb P^2$ be the hopf fibration of the complex projective plane $\mathbb P^2$. $S^5$ can be viewed as the circle bundle over $\mathbb P^2$ twisted by $\frac{\omega_{FS}}{2\pi}\in H^2(\mathbb P^2,\mathbb Z)$. Here $\omega_{FS}$ is the Fubini-Study metric on $\mathbb P^2$. We let $\pi:S^5\times S^1\to \mathbb P^2$ be the natural projection. Then using a canonical way (c.f. \cite{FY,GP}), we can define  a complex structure on $S^5\times S^1$ such that $\pi$ is a holomorphic map. We can define a natural hermitian metric on $S^5\times S^1$ as follows:
\begin{equation}
\omega_0=\pi^\ast\omega_{FS}+\frac{\sqrt{-1}}2 \theta\wedge\bar \theta,
\end{equation}
where $\theta=\theta_1+\sqrt{-1}\theta_2$ is a $(1,0)$-form on $S^5\times S^1$ such that $d\theta_1=\pi^\ast\omega_{FS}$ and $d\theta_2=0$. So  $\bar\partial\theta=\pi^\ast\omega_{FS}$ and $\partial\theta=0$ which imply
\begin{equation}
\frac{\sqrt{-1}}2 \partial\bar\partial\omega_0=-\frac 1 4 \pi^\ast\omega_{FS}^2.
\end{equation}
Thus
\begin{equation}
\frac{\sqrt{-1}}2\partial\bar\partial \omega_0\wedge \omega_0=\Bigl(\frac{\sqrt{-1}}2\Bigr)^3\pi^\ast\omega_{FS}^2\wedge\theta\wedge\bar\theta
=-\frac{\omega_0^3}{3!}
\end{equation}
and therefore
$$\sqrt{-1}\int_{S^5\times S^1} \partial\bar\partial(e^v\omega_0)\wedge \omega_0=\sqrt{-1}\int_{S^5\times S^1} e^v\omega_0\wedge\partial\bar\partial\omega_0<0.$$
Hence, we have $\gamma_1(\omega_0) <0$, by Corollary~\ref{co:sl2};
so $-1\in\Xi_1(S^5\times S^1)$. Then by Corollary
\ref{cor0}, $0\in \Xi_1(S^5\times S^1)$. That is we have
\begin{prop}
There exists  a 1-Gauduchou metric on $S^5\times S^1$.
\end{prop}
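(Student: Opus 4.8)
The plan is to apply Corollary~\ref{cor0} to the compact complex $3$-manifold $S^5 \times S^1$ with $k = 1$; note that the complex dimension here is $n = 3$ (as $S^5\times S^1$ is real $6$-dimensional), so $n - 2 = 1$ and $k = 1$ is the only relevant index. By that corollary, in order to produce a hermitian metric $\omega$ with $\gamma_1(\omega) = 0$ it suffices to exhibit two hermitian metrics on $S^5 \times S^1$, one with strictly positive and one with strictly negative $\gamma_1$ value. Once such a metric with vanishing $\gamma_1$ is found, Proposition~\ref{pr:kGau} identifies its conformal rescaling as a $1$-st Gauduchon metric, which is exactly the assertion.

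For the negative side I would use the natural metric $\omega_0 = \pi^\ast \omega_{FS} + (\sqrt{-1}/2)\,\theta \wedge \bar\theta$ constructed just above. Using the structure equations $\bar\partial \theta = \pi^\ast \omega_{FS}$ and $\partial \theta = 0$ of the twisted circle bundle, the computation already carried out gives $(\sqrt{-1}/2)\,\partial\bar\partial \omega_0 \wedge \omega_0 = -\omega_0^3/3! < 0$; integrating $\partial\bar\partial(e^v \omega_0)\wedge\omega_0$ over $S^5\times S^1$ and invoking Corollary~\ref{co:sl2} then yields $\gamma_1(\omega_0) < 0$, so that $-1 \in \Xi_1(S^5\times S^1)$. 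For the positive side I would simply invoke Theorem~\ref{th:posgam}: since $S^5 \times S^1$ is a $3$-dimensional hermitian manifold, that theorem furnishes a metric $\omega_1$ with $\gamma_1(\omega_1) > 0$, i.e., $1 \in \Xi_1(S^5\times S^1)$.

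With both signs in hand, I would run the argument of Corollary~\ref{cor0}: along the segment $\omega_t = t\omega_1 + (1-t)\omega_0$, the function $t \mapsto \gamma_1(\omega_t)$ is continuous (indeed smooth) by Proposition~\ref{pr:moduli}, equals a negative number at $t = 0$ and a positive number at $t = 1$, so by the Mean Value Theorem there is some $t_0 \in (0,1)$ with $\gamma_1(\omega_{t_0}) = 0$. Proposition~\ref{pr:kGau} then shows that $e^{v}\omega_{t_0}$, where $v$ is the function associated to $\omega_{t_0}$ by Corollary~\ref{co:sl2}, is a $1$-st Gauduchon metric on $S^5 \times S^1$.

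The proposition itself will present no real obstacle, since all the analytic content has already been absorbed into Corollary~\ref{co:sl2} (existence and uniqueness of $\gamma_1$), Proposition~\ref{pr:moduli} (smooth dependence of $\gamma_1$ on $\omega$), and Theorem~\ref{th:posgam} (attainability of a positive $\gamma_1$ in dimension three). The genuine work lies upstream in the explicit curvature computation establishing $\gamma_1(\omega_0) < 0$, which depends essentially on the precise structure equations $\bar\partial\theta = \pi^\ast\omega_{FS}$ and $\partial\theta = 0$; the final step is merely the assembly of these facts through the interpolation argument of Corollary~\ref{cor0}.
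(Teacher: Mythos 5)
Your proposal is correct and takes essentially the same route as the paper: the negative side comes from the identical computation $\tfrac{\sqrt{-1}}{2}\partial\bar\partial\omega_0\wedge\omega_0<0$ for the natural metric $\omega_0$ (giving $\gamma_1(\omega_0)<0$ via Corollary~\ref{co:sl2}), the positive side from Theorem~\ref{th:posgam}, and the conclusion from Corollary~\ref{cor0} together with Proposition~\ref{pr:kGau}. The only difference is cosmetic: you make explicit the appeal to Theorem~\ref{th:posgam} and the interpolation/intermediate-value argument inside Corollary~\ref{cor0}, which the paper invokes implicitly.
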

Using above natural metric $\omega_0$ on $S^5\times S^1$, we can also prove
\begin{prop}
There does not exist any pluri-closed metric on $S^5\times S^1$.
\end{prop}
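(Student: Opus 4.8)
The plan is to derive a contradiction from the formal self-adjointness of $\partial\bar\partial$ together with the fact, recorded in the computation above, that $\partial\bar\partial\omega_0$ is a nonzero semipositive form. Rewriting the identity $\tfrac{\sqrt{-1}}{2}\partial\bar\partial\omega_0=-\tfrac14\pi^\ast\omega_{FS}^2$ established above gives
\[
   \partial\bar\partial\omega_0=\frac{\sqrt{-1}}{2}\,\pi^\ast\omega_{FS}^2,
\]
a semipositive $(2,2)$-form that does not vanish identically. Suppose, for contradiction, that $S^5\times S^1$ carried a pluri-closed metric $\omega$, i.e.\ a positive $(1,1)$-form with $\partial\bar\partial\omega=0$.

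First I would prove the integration-by-parts identity
\[
   \int_{S^5\times S^1}\partial\bar\partial\omega_0\wedge\omega
   =\int_{S^5\times S^1}\omega_0\wedge\partial\bar\partial\omega,
\]
valid for any two $(1,1)$-forms on this compact complex threefold. This is obtained by applying Stokes' theorem twice: to $\partial(\bar\partial\omega_0\wedge\omega)$, whose integrand has bidegree $(2,3)$ so that the integral of its $\partial$ vanishes, and to $\bar\partial(\omega_0\wedge\partial\omega)$, whose integrand has bidegree $(3,2)$; tracking the signs coming from $\partial\bar\partial=-\bar\partial\partial$ yields the asserted equality. Since $\partial\bar\partial\omega=0$, the right-hand side is zero, and hence $\int_{S^5\times S^1}\partial\bar\partial\omega_0\wedge\omega=0$.

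On the other hand, substituting the formula for $\partial\bar\partial\omega_0$ gives
\[
   \int_{S^5\times S^1}\partial\bar\partial\omega_0\wedge\omega
   =\frac{\sqrt{-1}}{2}\int_{S^5\times S^1}\pi^\ast\omega_{FS}^2\wedge\omega,
\]
and the final step, which I regard as the crux, is to see that this is nonzero. For this I would use the fibration structure: at each point pick a unitary coframe adapted to the splitting of $T^{1,0}$ into the two horizontal directions pulled back from $\mathbb P^2$ and the one-dimensional vertical fiber direction. In such a coframe $\pi^\ast\omega_{FS}^2$ is a positive multiple of the wedge of the horizontal factors, so $\pi^\ast\omega_{FS}^2\wedge\omega$ isolates exactly the fiber component of $\omega$, which is strictly positive because $\omega$ is a metric. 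Thus $\pi^\ast\omega_{FS}^2\wedge\omega$ is a strictly positive volume form, its integral is a nonzero purely imaginary number, and this contradicts the vanishing obtained above. The only genuinely delicate point is this pointwise positivity; the sign bookkeeping in the self-adjointness identity, though it demands care, is routine.
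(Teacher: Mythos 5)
Your proposal is correct and follows essentially the same route as the paper: both arguments integrate by parts to move $\partial\bar\partial$ between $\omega$ and $\omega_0$, use the identity $\frac{\sqrt{-1}}{2}\partial\bar\partial\omega_0=-\frac{1}{4}\pi^\ast\omega_{FS}^2$, and conclude from the pointwise strict positivity of $\pi^\ast\omega_{FS}^2\wedge\omega$ (the paper states this positivity without the adapted-coframe justification you supply, and leaves the Stokes bookkeeping implicit). The only cosmetic slip is the phrase ``its integral is a nonzero purely imaginary number'': the integral of $\pi^\ast\omega_{FS}^2\wedge\omega$ itself is real and positive; it is $\frac{\sqrt{-1}}{2}\int\pi^\ast\omega_{FS}^2\wedge\omega$ that is purely imaginary and nonzero, which is what yields the contradiction.
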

\begin{proof}
If there would exist a pluri-closed metric $\omega$ on $S^5\times S^1$, then
\begin{equation}
0=\int_{S^5\times S^1}\frac {\sqrt{-1}} 2\partial\bar\partial \omega\wedge\omega_0=-\frac 1 4\int_{S^5\times S^1}\omega\wedge\pi^\ast\omega_{FS}^2<0
\end{equation}
since $\omega\wedge\pi^\ast\omega_{FS}^2$ is the strictly positive definite $(3,3)$-form on $S^5\times S^1$. That is a contradiction.
\end{proof}

We also know that there does not exist any balanced metric on $S^5\times S^1$. The proof is standard and is given here.
There is an obstruction to the existence of a balanced metric on a compact complex manifold. Namely, on a compact complex manifold with a balanced metric no compact complex submanifold of codimension 1 can be homologous to 0 \cite{Mic}. Now for $\pi:S^5\times S^1\to \mathbb P^2$, since $\pi$ is a holomorphic, $\pi^{-1}(\mathbb P^2)$ is a complex hypersurface in $S^5\times S^1$. Certainly $\pi^{-1}(\mathbb P^2)$ is homologous to zero in $S^5\times S^1$ since $H^4(S^5\times S^1,\mathbb R)=0$. Therefor there  exist no balanced metric on $S^5\times S^1$.

\end{document}